\numberwithin{equation}{section}
\newtheorem{theorem}[equation]{Theorem}
\newtheorem{proposition}[equation]{Proposition}
\newtheorem{lemma}[equation]{Lemma}
\newtheorem{corollary}[equation]{Corollary}
\theoremstyle{definition}
\DeclareMathOperator{\bgres}{bg-res}
\DeclareMathOperator{\bgspec}{bg-spec}
\DeclareMathOperator{\Ind}{ind}
\DeclareMathOperator{\rg}{rg}
\DeclareMathOperator{\spec}{spec}
\DeclareMathOperator{\res}{res}
\DeclareMathOperator{\graph}{graph}
\DeclareMathOperator{\SA}{\mathfrak{SA}}
\def\C{\mathbb C}
\def\R{\mathbb R}
\def\Dom{\mathcal D}
\def\Sing{\mathcal E}
\def\K{\mathcal K}
\def\M{\mathcal M}
\def\L{\mathscr L}
\def\Gr{\mathrm{Gr}}
\def\Q{\mathfrak{Q}}
\def\embed{\hookrightarrow}
\def\Id{I}
\def\im{i}
\def\open#1{\smash[t]{\overset{{}_{\,\,\circ}}{#1}{}}}
\def\Wedge{\raise2ex\hbox{$\mathchar"0356$}}
\def\set#1{\{#1\}}
\def\neutral#1{}  %this is used only to balance parentheses
\def\display#1#2{\begin{minipage}[c]{#1} {#2}\end{minipage}}
\begin{document}
\title{Spectrally unstable domains}

\begin{abstract}
Let $H$ be a separable Hilbert space, $A_c:\Dom_c\subset H\to H$ a densely defined unbounded operator, bounded from below, let $\Dom_{\min}$ be the domain of the closure of $A_c$ and $\Dom_{\max}$ that of the adjoint. Assume that $\Dom_{\max}$ with the graph norm is compactly contained in $H$ and that $\Dom_{\min}$ has finite positive codimension in $\Dom_{\max}$. Then the set of domains of selfadjoint extensions of $A_c$ has the structure of a finite-dimensional manifold $\SA$ and the spectrum of each of its selfadjoint extensions is bounded from below. If $\zeta$ is strictly below the spectrum of $A$ with a given domain $\Dom_0\in \SA$, then $\zeta$ is not in the spectrum of $A$ with domain $\Dom\in \SA$ near $\Dom_0$. But $\SA$ contains elements $\Dom_0$ with the property that for every neighborhood $U$ of $\Dom_0$ and every $\zeta\in \R$ there is $\Dom\in U$ such that $\spec(A_\Dom)\cap (-\infty,\zeta)\ne \emptyset$. We characterize these ``spectrally unstable'' domains as being those satisfying a nontrivial relation with the domain of the Friedrichs extension of $A_c$.

\end{abstract}

\author{Gerardo A. Mendoza}
\address{Department of Mathematics\\ Temple University\\ Philadelphia, PA 19122}
\email{gmendoza@temple.edu}

\subjclass[2010]{Primary: 47B25, 47A10; Secondary: 47F05, 58J05, 35P05}
\keywords{Selfadjoint extensions, stability of the spectrum, manifolds with conical singularities, elliptic operators of conical type.}

% 47B25  	Symmetric and selfadjoint operators (unbounded)
% 47A10  	Spectrum, resolvent

% 47F05  	Partial differential operators
% 58J05  	Elliptic equations on manifolds, general theory
% 35P05  	General topics in linear spectral theory

\maketitle

%%%%%%%%%%%%%%%%%%%%%%%%%%%%%%%
\section{Introduction}

Throughout the paper, $H$ is a separable Hilbert space,
\begin{equation}\label{InitialDef}
A_c:\Dom_c\subset H\to H
\end{equation}
is a densely defined unbounded operator which is semibounded from below, and
\begin{equation*}
A:\Dom_{\max}\subset H\to H
\end{equation*}
is the adjoint operator, automatically an extension of the symmetric operator \eqref{InitialDef}. 

The space $\Dom_{\max}$ is a Hilbert space with the inner product 
\begin{equation}\label{GraphProduct}
(u,v)_A=(Au,Av)+(u,v),\quad u,v\in \Dom_{\max}
\end{equation}
where the inner product on the right is that of $H$. It is further assumed that the inclusion $\Dom_{\max}\embed H$ is compact and that $\Dom_{\min}$, the domain of the closure of \eqref{InitialDef} (the closure of $\Dom_c$ in $\Dom_{\max}$) has finite positive codimension in $\Dom_{\max}$. 

\smallskip
With these assumptions, all closed extensions of \eqref{InitialDef} are Fredholm and the set of domains of extensions with index $0$ can be parametrized by the elements of a compact manifold (a Grassmannian) in which the domains of the selfadjoint extensions form a smooth compact submanifold $\SA$. It is a fact that all these selfadjoint extensions have discrete spectrum bounded from below. (See Section \ref{Basics} for details.) Write $A_\Dom$ for the operator with domain $\Dom$. The assertion that 
\begin{equation*}
\display{300pt}{every $\Dom_0\in \SA$ has a neighborhood $U_0$ for which there is $C_0\in \R$ such that $\Dom\in U_0\implies \spec(A_\Dom)\subset \set{\lambda:\Re\lambda>C_0}$
}
\end{equation*}
is false. Namely, if it were to hold, then $\SA$, being compact, would admit a finite cover by open sets $U_j$ such that the spectrum of $A_\Dom$ is bounded from below by the same constant in each set $U_j$. Hence there would be an absolute lower bound for the spectra of all selfadjoint extensions, which is not true (see Lemma \ref{AllLambdaInSomeSpec} below). So in fact there is $\Dom_0\in \SA$ such that 
\begin{equation}\label{UnstableDefinition}
\display{300pt}{for every neighborhood $U$ of $\Dom_0$ and every $\zeta\in \R$ there is $\Dom\in U$ such that $\spec(A_\Dom)\cap (-\infty,\zeta)\ne \emptyset$.
}
\end{equation}

Such domains will be called spectrally unstable. The main purpose of this paper is to establish the following characterization of these domains (proof in Section~\ref{Main}): 

\begin{theorem}\label{Characterization}
Let $\Dom_F\in \SA$ be the domain of the Friedrichs extension of \eqref{InitialDef}. The element $\Dom\in \SA$ is spectrally unstable if and only if $(\Dom\cap \Dom_F)/\Dom_{\min}\ne 0$.
\end{theorem}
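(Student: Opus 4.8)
The plan is to recast the problem entirely in the geometry of the Lagrangian Grassmannian of $V=\Dom_{\max}/\Dom_{\min}$. On $\Dom_{\max}$ put the sesquilinear boundary form $[u,v]=(Au,v)-(u,Av)$; its radical is exactly $\Dom_{\min}$, so it descends to a nondegenerate form on the finite-dimensional space $V$, with respect to which a subspace of $\Dom_{\max}$ containing $\Dom_{\min}$ is the domain of a selfadjoint extension precisely when its image $L$ in $V$ is Lagrangian. Thus $\SA$ is identified with the Lagrangian Grassmannian $\Lambda(V)$, and ``$\Dom\in U$ near $\Dom_0$'' means ``$L_\Dom$ near $L_{\Dom_0}$ in $\Lambda(V)$''. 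Let $A_F=A_{\Dom_F}$ and $c=\inf\spec(A_F)$. For $\mu<c$ the operator $A_{\max}-\mu$ is onto (since $A_F-\mu$ already is), $N_\mu:=\ker(A_{\max}-\mu)$ maps injectively into $V$ onto a subspace $L_\mu$, which is Lagrangian because $[\phi,\psi]=(\mu-\overline\mu)(\phi,\psi)=0$ on $N_\mu$; moreover $\Dom_{\max}=\Dom_F\dotplus N_\mu$. The decisive elementary observation is that, for $\mu<c$, $A_\Dom-\mu$ is selfadjoint with discrete spectrum, so
\[
\mu\in\spec(A_\Dom)\iff\ker(A_\Dom-\mu)\ne 0\iff\Dom\cap N_\mu\ne 0\iff L_\Dom\cap L_\mu\ne 0,
\]
whence $\spec(A_\Dom)\cap(-\infty,c)=\{\mu<c:L_\Dom\cap L_\mu\ne 0\}$.

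The one input linking this to the Friedrichs extension --- and the step I expect to be the main obstacle --- is that the real-analytic curve $\mu\mapsto L_\mu$ on $(-\infty,c)$ extends continuously to $\mu=-\infty$ with $L_{-\infty}=L_F$. This is the incarnation, in the present framework, of the classical fact that the Friedrichs extension is the ``boundary condition at $-\infty$'' (equivalently, the extension with smallest form domain); one proves it by checking that every subsequential limit in $\Lambda(V)$ of $L_\mu$ as $\mu\to-\infty$ must equal $L_F$, using the form-domain characterization of $\Dom_F$ --- or one quotes the corresponding statement from Section~\ref{Basics}. Granting this, for any $C<\min(c,\inf\spec(A_{\Dom_0}))$ the set $K=\{L_\mu:\mu\le C\}\cup\{L_F\}$ is a compact subset of $\Lambda(V)$ (the only limit point escaping $\{L_\mu:\mu\le C\}$ is $L_F$, so $K=\overline{\{L_\mu:\mu\le C\}}$).

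Suppose first that $(\Dom_0\cap\Dom_F)/\Dom_{\min}=0$, i.e.\ $L_{\Dom_0}\cap L_F=0$. Since also $L_{\Dom_0}\cap L_\mu=0$ for every $\mu\le C<\inf\spec(A_{\Dom_0})$, the Lagrangian $L_{\Dom_0}$ is transverse to every element of the compact family $K$. Transversality to a fixed compact family of Lagrangians is an open condition, so there is a neighborhood $U$ of $\Dom_0$ such that $L_\Dom$ is transverse to all of $K$ whenever $\Dom\in U$; for such $\Dom$ one gets $\spec(A_\Dom)\cap(-\infty,C]=\emptyset$, hence $\spec(A_\Dom)\subset(C,\infty)$. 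Thus $\Dom_0$ is spectrally stable; equivalently, a spectrally unstable domain must satisfy $(\Dom\cap\Dom_F)/\Dom_{\min}\ne 0$.

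Suppose conversely that $L_{\Dom_0}\cap L_F\ne 0$, and fix a unit vector $\xi\in L_{\Dom_0}\cap L_F$; let a neighborhood $U$ of $\Dom_0$ and $\zeta\in\R$ be given. Choose $\mu<\min(\zeta,c)$ so negative that $L_\mu$ is as close as we wish to $L_F$; since $\xi\in L_F$ we may take a unit vector $\xi_\mu\in L_\mu$ (say the normalized projection of $\xi$ onto $L_\mu$) with $\xi_\mu\to\xi$ as $\mu\to-\infty$. Now $\xi_\mu$ is isotropic for $[\cdot,\cdot]$ (it lies in the Lagrangian $L_\mu$), as is $\xi$, so there is an isometry $g_\mu$ of $(V,[\cdot,\cdot])$, close to the identity once $\xi_\mu$ is close to $\xi$, with $g_\mu\xi=\xi_\mu$ (the isometry group is transitive on isotropic vectors and the orbit map is a submersion). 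Then $L:=g_\mu L_{\Dom_0}$ is Lagrangian, lies in $U$ once $\mu$ is negative enough, and contains $\xi_\mu\in L\cap L_\mu$; hence the corresponding $\Dom\in U$ has $\mu\in\spec(A_\Dom)\cap(-\infty,\zeta)$. As $U$ and $\zeta$ were arbitrary, $\Dom_0$ satisfies \eqref{UnstableDefinition}. Apart from the limit $L_{-\infty}=L_F$, the only tools used are the openness of transversality to a compact family and the passage of a given nearby isotropic vector through a Lagrangian close to $L_{\Dom_0}$ via a small symplectic rotation --- both routine once the Grassmannian picture is in place.
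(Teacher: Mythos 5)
Your Lagrangian--Grassmannian reformulation is sound, and once the identification of $\SA$ with the Lagrangian Grassmannian $\Lambda(V)$ of the skew-Hermitian form $[\cdot,\cdot]$ on $V=\Dom_{\max}/\Dom_{\min}$ is made, your two outer arguments are actually cleaner than the paper's: stability of $\Dom_0$ with $L_{\Dom_0}\cap L_F=0$ follows from openness of transversality to the compact set $\overline{\set{L_\mu:\mu\leq C}}$, and instability when $L_{\Dom_0}\cap L_F\ne 0$ follows by pushing $\xi\in L_{\Dom_0}\cap L_F$ to a nearby null vector $\xi_\mu\in L_\mu$ by a small element of $U(d,d)$. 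The paper instead proves stability via Proposition~\ref{UnstableImpliesInVariety} and instability by an explicit construction of a family $\graph T_\lambda\in\SA$ built in two pieces $T_{\lambda,0}$, $T_{\lambda,1}$.

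The genuine gap is exactly where you flag it. The convergence $L_\mu\to L_F$ as $\mu\to-\infty$ is Theorem~\ref{FriedrichsAsLimit}; it is \emph{not} in Section~\ref{Basics}, and proving it is essentially the entire analytic content of Sections 3--5 of the paper: the $\Dom$-Sobolev scale $H^s_\Dom$, the Fourier representation of the boundary functionals $\delta_u$ and formula \eqref{sNormOfDeltaU}, the regularity result Proposition~\ref{DeltaLikeReg} ($\mathcal P_{\Dom_F^\perp}\cap H^{-1/2}_{\Dom_F^\dag}=0$, proved via the Friedrichs form-completion), and the quadratic-form estimate culminating in Corollary~\ref{SingDelta} that $\|F_{\Dom_F}(\lambda)^{-1}\|\to 0$. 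Your one-clause sketch (``check every subsequential limit equals $L_F$ via the form-domain characterization'') does not explain how to control the lifts $\phi_j\in\K_{\mu_j}$ of a convergent sequence $\xi_j\in L_{\mu_j}$, and this a priori control is precisely what the $H^{-1/2}$ regularity buys. Note that compactness of your set $K$ is automatic (it is a closed subset of the compact $\Lambda(V)$ whether or not the limit is identified), but without knowing that the \emph{only} accumulation point of $\set{L_\mu}$ is $L_F$ you cannot conclude that $L_{\Dom_0}$ is transverse to all of $K$, and the stability direction collapses. So the geometric frame is right and the two flanking arguments are correct and efficient, but the pivot on which both rest has been deferred rather than proved.
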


Viewing the problem from the perspective of the von Neumann theory \cite{vNeumann} (see \cite[Theorem X.2]{RandS}), let $\K_{\pm\im}=\ker (A_{\Dom_{\max}}\mp \im)$. With the assumptions of the first two paragraphs above, these subspaces of $H$ have the same finite dimension. Let $\Dom_0\in \SA$. The spectrum of $U_{\Dom_0}=(A_{\Dom_0}-\im)(A_{\Dom_0}+\im)^{-1}$, the Cayley transform of $A_{\Dom_0}$, consists of $1$ and a discrete subset of the circle $S^1\subset \C$. The part of the spectrum of $U_{\Dom_0}$ in $\Im\lambda<0$ accumulates at $1$, and so the fact that arbitrarily small perturbations of $\Dom_0$ to $\Dom\in \SA$ can lead to an apparently spontaneous generation of spectrum of $A_\Dom$ arbitrarily close to $-\infty$ is not surprising. What Theorem \ref{Characterization} does, is characterize those domains $\Dom_0$ for which arbitrarily small perturbations lead to spectrum of the Cayley transform spilling over from $\Im\lambda\leq 0$ to $\Im \lambda>0$ across $1$.

Note in passing that for no $\Dom\in \SA$ can the part of the spectrum of $U_\Dom$ on the semicircle in $\Im \lambda>0$ accumulate at $1$, since the spectrum of any $A_\Dom$ is bounded below by \cite[Theorem 7, pg.~217]{BirSol87}, quoted here as Theorem \ref{AllBddBelow}.

\smallskip
The key technical results are a very simple ``regularity'' result, Proposition~\ref{DeltaLikeReg}, and Theorem~\ref{FriedrichsAsLimit}, a statement concerning recovering the essential part of the domain of the Friedrichs extension as a limit of spaces associated with $\ker(A_{\Dom_{\max}}-\lambda)$. To describe these more precisely let $\Sing$ be the orthogonal complement of $\Dom_{\min}$ in $\Dom_{\max}$ and $\pi_{\max}$ the orthogonal projection on $\Sing$, all with the inner product \eqref{GraphProduct}. Domains of closed extensions of \eqref{InitialDef} correspond to the various subspaces $D\subset \Sing$ via $\Dom=D+\Dom_{\min}$, with selfadjoint extensions corresponding to the points of a submanifold $\SA$ of the Grassmannian of subspaces of $\Sing$ of a certain dimension (so it is not $\Dom_F$ that belongs to $\SA$ in Theorem \ref{Characterization}, but a certain subspace $D_F\subset \Sing$). Let $\K_\lambda=\ker (A_{\Dom_{\max}}-\lambda)$ and $K_\lambda=\pi_{\max}\K_\lambda$. Then $\lambda\mapsto K_\lambda$ is a smooth curve in $\SA$ if $\lambda$ is sufficiently negative, and $\lim_{\lambda\to -\infty} K_\lambda=D_F$. This is a consequence of the following. For any domain $\Dom=D+\Dom_{\min}$ with $D\in \SA$ and any $s\geq 0$ we define Hilbert spaces $H_\Dom^s$ using $A_\Dom$; these Sobolev-like spaces give $H_\Dom^0=H$ and $H_\Dom^1=\Dom$. For $u\in D^\perp$, the linear functional $\delta_u$ defined by $\Dom \ni v\mapsto (Av,u)-(v,Au)\in \C$ is an element of the dual space of $H_\Dom^1$, and may also be in $H^{-s}_{\Dom^\dag}$ for $0<s<1$, the dual of $H^s_\Dom$. We show that $\delta_u\notin H^{-1/2}_{\Dom^\dag}$ for $\Dom_F=D_F+\Dom_{\min}$ if $u\ne 0$.

\smallskip
Elliptic semibounded cone operators on compact manifolds $\M$ with boundary  acting on weighted $L^2$-spaces of sections of a Hermitian vector bundle $E\to\M$, 
\begin{equation*}
A:C_c^\infty(\open \M;E)\subset x^{-\nu}L^2_b(\M;E)\to x^{-\nu}L^2_b(\M;E),
\end{equation*}
have the properties stated in the first two paragraphs, see Lesch \cite[Proposition~1.3.16 and its proof]{Lesch1997}. The fine structure of the domain of the Friedrichs extension for these differential operators was given in \cite[Theorem 8.12]{GiMe2003}; the interested reader may consult these references for detailed information about such operators. The research leading to the papers \cite{GiKrMe-b,GiKrMe-a} was the motivation for looking into the instability issue. Friedrichs defined his extension in \cite{Fr1934a}. The nature of the domain in the abstract context was elucidated by Freudenthal in \cite{Fthal1936}.

\medskip
The author is grateful to T.~Krainer for suggestions that improved the manuscript and for pointing out reference \cite{BirSol87}.

%%%%%%%%%%%%%%%%%%%%%%%%%%%%%%%
\section{Domains, Selfadjointness}\label{Basics}

All closed extensions of \eqref{InitialDef} considered here will have as domain a subspace of $\Dom_{\max}$ containing $\Dom_{\min}$. Thus the domain of every closed extension of \eqref{InitialDef} is of the form
\begin{equation*}
\Dom=D+\Dom_{\min}
\end{equation*}
with $D$ a subspace of the orthogonal complement, $\Sing$, of $\Dom_{\min}$ in $\Dom_{\max}$ with respect to the inner product \eqref{GraphProduct}; $\Sing$ is finite-dimensional by hypothesis. In particular, the domain of the Friedrichs extension of \eqref{InitialDef} has the form $\Dom_F=D_F+\Dom_{\min}$ for some subspace $D_F\subset \Sing$.

The resolvent family of
\begin{equation*}
A:\Dom_F\subset H\to H
\end{equation*} 
consists of compact operators $B_F(\lambda):H\to H$, since they are also continuous as operators $H\to \Dom_F$ and the inclusion $\Dom_F\embed H$ is compact. It follows that $A$ with domain $\Dom_{\min}$ or $\Dom_{\max}$ is Fredholm, and from this and the finiteness of $\dim\Sing$, that every closed extension of \eqref{InitialDef} is Fredholm (with compact resolvent when it exists). It is easily verified that the index of $A$ with domain $\Dom= D+\Dom_{\min}$ is 
\begin{equation}\label{LeschRelIndex}
\Ind A_\Dom=\Ind A_{\Dom_{\min}}+\dim D.
\end{equation}
Since $A_{\Dom_{\min}}-\lambda\Id$ is injective for large negative $\lambda$, $\Ind A_{\Dom_{\min}}\leq 0$. And since $A_{\Dom_{\max}}-\lambda\Id$ is surjective for such $\lambda$, $\Ind A_{\Dom_{\max}}\geq 0$. From $\Ind A_{\Dom_{\max}}=\Ind A_{\Dom_{\min}}+\dim\Sing$ and $\Ind A_{\Dom_{\max}}=-\Ind A_{\Dom_{\min}}$ (because $A_{\Dom_{\max}}$ and $A_{\Dom_{\min}}$ are adjoints of each other) one derives that $\dim\Sing=2d$ with $d=-\Ind A_{\Dom_{\min}}$; this is a positive number since $\dim \Sing>0$. One can then view the set of domains of selfadjoint extensions of \eqref{InitialDef} as
\begin{equation*}
\SA=\set{D\subset \Sing:A\text{ with domain }D+\Dom_{\min}\text{ is selfadjoint}},
\end{equation*}
a subset of $\Gr_d(\Sing)$, the Grassmannian of $d$-dimensional subspaces of $\Sing$. As such, $\SA$ is a compact real analytic submanifold of dimension $d^2$ (see Proposition \ref{SAisAManifold}).

Let 
\begin{equation*}
[\cdot,\cdot\cdot]_A:\Dom_{\max}\times \Dom_{\max}\to \C
\end{equation*}
denote the skew-Hermitian form
\begin{equation*}
[u,v]_A=(Au,v)-(u,A v).
\end{equation*}
Then $[u,v]_A = 0$ if either $u$ or $v$ belongs to $\Dom_{\min}$, so
\begin{equation*}
[u,v]_A=[\pi_{\max}u,\pi_{\max}v]_A
\end{equation*}
where
\begin{equation*}
\pi_{\max}:\Dom_{\max}\to \Dom_{\max}
\end{equation*}
is the orthogonal projection on $\Sing$. The restriction of the Green form $[\cdot,\cdot\cdot]_A$ to $\Sing$ is non-degenerate because the Hilbert space adjoint of $A$ with domain $\Dom_{\max}$ is $A$ with domain $\Dom_{\min}$.

The facts collected in the following lemma can be verified directly, or following the arguments in \cite[Section 6]{GiKrMe-a}.

\begin{lemma}
We have 
\begin{equation}\label{Emax}
\Sing=\set {u\in \Dom_{\max}: Au\in \Dom_{\max}\text{ and }A^2 u=-u}. 
\end{equation}
If $u\in \Sing$, then $Au\in \Sing$, and the map 
\begin{equation}\label{A:EmaxtoEmax}
A|_\Sing:\Sing\to \Sing
\end{equation}
is an isometry with inverse $-A|_\Sing$. If $u$, $v\in\Sing$, then 
\begin{equation}\label{GreenAndInnerProduct}
[u,Av]_A=(u,v)_A.
\end{equation}
Consequently, for any subspace $D\subset \Sing$, the adjoint of
\begin{equation*}
A:D+\Dom_{\min}\subset H\to H
\end{equation*}
is
\begin{equation}\label{HilbertAdjoint}
A:A(D^\perp)+\Dom_{\min}\subset H\to H
\end{equation}
where $D^\perp$ is the orthogonal complement of $D$ in $\Sing$. Consequently
\begin{equation}\label{SA}
D\in \SA\iff A(D^\perp)=D\iff A(D)=D^\perp.
\end{equation}
and in particular, $D\in \SA\implies D^\perp\in \SA$. 
\end{lemma}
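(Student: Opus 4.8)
The plan is to verify the four assertions in order, using only: (i) that $A$ with domain $\Dom_{\min}$ and $A$ with domain $\Dom_{\max}$ are mutually adjoint, (ii) that $\Sing$ is the $(\cdot,\cdot)_A$-orthogonal complement of $\Dom_{\min}$ in $\Dom_{\max}$, and (iii) that $[u,v]_A=(Au,v)-(u,Av)$ vanishes whenever $u$ or $v$ lies in $\Dom_{\min}$. First I would establish \eqref{Emax}. For $u\in\Dom_{\max}$, being $(\cdot,\cdot)_A$-orthogonal to $\Dom_{\min}$ means $(Au,Aw)+(u,w)=0$ for all $w\in\Dom_{\min}$; since $A_{\Dom_{\min}}$ is the adjoint of $A_{\Dom_{\max}}$, the map $w\mapsto(Au,Aw)$ equals $w\mapsto(v,Aw)$ for some $v\in H$ exactly when $Au\in\Dom((A_{\Dom_{\min}})^*)=\Dom_{\max}$, and then $v=A(Au)=A^2u$; so $u\perp_A\Dom_{\min}$ forces $Au\in\Dom_{\max}$ and $(A^2u+u,w)=0$ for all $w\in\Dom_{\min}$, and since $\Dom_{\min}$ is dense in $H$ this gives $A^2u=-u$. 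The reverse inclusion is the same computation read backwards. This identity also shows at once that $u\in\Sing\implies Au\in\Sing$: if $A^2u=-u$ with $Au\in\Dom_{\max}$, then $A(Au)=Au'\in\Dom_{\max}$ with $A^2(Au)=A(-u)=-(Au)$, so $Au$ satisfies \eqref{Emax}.

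Next I would treat the isometry statement \eqref{A:EmaxtoEmax}. For $u\in\Sing$, $(Au,Au)_A=(A^2u,A^2u)+(Au,Au)=(-u,-u)+(Au,Au)=(u,u)+(Au,Au)=(u,u)_A$, so $A|_\Sing$ is norm-preserving; it is bijective on the finite-dimensional space $\Sing$ with $A|_\Sing\circ A|_\Sing=-\Id$ by \eqref{Emax}, hence $A|_\Sing$ is a (complex-linear) isometry with inverse $-A|_\Sing$. For \eqref{GreenAndInnerProduct}, I compute $[u,Av]_A=(Au,Av)-(u,A^2v)=(Au,Av)-(u,-v)=(Au,Av)+(u,v)=(u,v)_A$, using $A^2v=-v$. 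A small point to note is that $[\cdot,\cdot\cdot]_A$ restricted to $\Sing$ is conjugate-linear in the first slot and linear in the second (skew-Hermitian), matching $(\cdot,\cdot)_A$; this is the reason \eqref{GreenAndInnerProduct} comes out clean rather than with a stray sign or conjugate.

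Then I would identify the Hilbert-space adjoint \eqref{HilbertAdjoint}. Fix $D\subset\Sing$ and set $\Dom=D+\Dom_{\min}$. By the standard characterization, $w\in\Dom(A_\Dom^*)$ iff $w\in\Dom_{\max}$ (since $A_\Dom\supset A_c$ forces $A_\Dom^*\subset A_c^*=A_{\Dom_{\max}}$) and $[u,w]_A=0$ for all $u\in\Dom$; because $[u,w]_A=[\pi_{\max}u,\pi_{\max}w]_A$ and $\pi_{\max}(\Dom)=D$, this is the condition $[v,\pi_{\max}w]_A=0$ for all $v\in D$. Writing $\pi_{\max}w=A(v')$ (possible since $A|_\Sing$ is onto $\Sing$), \eqref{GreenAndInnerProduct} turns $[v,A v']_A=0$ into $(v,v')_A=0$, i.e.\ $v'\in D^\perp$, i.e.\ $\pi_{\max}w\in A(D^\perp)$; hence $\Dom(A_\Dom^*)=A(D^\perp)+\Dom_{\min}$, which is \eqref{HilbertAdjoint}. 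Finally \eqref{SA}: $A_\Dom$ is selfadjoint iff $A(D^\perp)+\Dom_{\min}=D+\Dom_{\min}$, and since both $D$ and $A(D^\perp)$ are subspaces of $\Sing$ while $\Dom_{\min}\cap\Sing=0$, this is equivalent to $A(D^\perp)=D$; applying $A$ and using $A^2=-\Id$ on $\Sing$ together with $A(D^\perp)=(A D)^\perp$ (which follows because $A|_\Sing$ is an isometry) gives the equivalent form $A(D)=D^\perp$, and $D\in\SA\implies D^\perp\in\SA$ is then immediate by symmetry of the condition. I expect the only mildly delicate point to be bookkeeping the sesquilinearity conventions so that \eqref{GreenAndInnerProduct} and the passage $A(D^\perp)=(AD)^\perp$ carry the correct (absence of) conjugations; everything else is a direct finite-dimensional linear-algebra computation once \eqref{Emax} is in hand.
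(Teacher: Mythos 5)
Your proof is correct and follows essentially the same route as the paper: the identities \eqref{Emax}--\eqref{GreenAndInnerProduct} are exactly the direct verification from the mutual adjointness of $A_{\Dom_{\min}}$ and $A_{\Dom_{\max}}$ that the paper leaves to the reader, and your identification of the adjoint domain via $[v,Av']_A=(v,v')_A$ is the paper's own computation, except that you land on $D^*=A(D^\perp)$ directly by writing $\pi_{\max}w=Av'$ instead of first getting $D^*=(AD)^\perp$ and then invoking the double adjoint. The only blemishes are typographical, not mathematical: in the \eqref{Emax} argument the test condition should read $(Au,Aw)=(v,w)$ for all $w\in\Dom_{\min}$ (not $(v,Aw)$), and with the paper's convention (inner product linear in the first slot, as in the definition of $\delta_u$) the Green form is linear in the first argument and conjugate-linear in the second, the reverse of your parenthetical remark.
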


We discuss the claim about the adjoint. The combination of \eqref{Emax} and \eqref{A:EmaxtoEmax} gives $A^2|_\Sing=-\Id$, so \eqref{GreenAndInnerProduct} can also be written as
\begin{equation*}
[u,v]_A=-(u,Av)_A.
\end{equation*}
Suppose $\Dom=D+\Dom_{\min}$ with $D\subset \Sing$. The domain of the adjoint of $A_\Dom$ is $\Dom^*=D^*+\Dom_{\min}$ for some subspace $D^*\subset \Sing$. Since $A_{\Dom_{\min}}$ is symmetric, the condition that $v\in D^*$ reduces to the statement that $[u,v]_A=0$ for all $u\in D$, equivalently,
\begin{equation*}
v\in D^*\iff (u,Av)_A=0\text{ for all }u\in D.
\end{equation*}
Thus $v\in D^*\iff Av\in D^\perp$, and so $D^*=(AD)^\perp$. Also $D=(AD^*)^\perp$, so $D^\perp=AD^*$, and using $A^2=-\Id$ again we get $D^*=A(D^\perp)$, which gives the assertion in \eqref{HilbertAdjoint}.

\medskip
If $D\in \Gr_d(\Sing)$ and $T:D\to D^\perp$ is a linear map, then 
\begin{equation*}
\graph T=\set{u+Tu:u\in D}\subset \Sing
\end{equation*}
is again an element of $\Gr_d(\Sing)$. The set $U_D$ of all such elements is a neighborhood of $D$ in $\Gr_d(\Sing)$.

\begin{lemma}\label{TheSAspaces}
Suppose $D\in \SA$. Then 
\begin{equation*}
U_D\cap \SA=\set{\graph T :\text{ the map }AT:D\to D\text{ is selfadjoint}}.
\end{equation*}
Here selfadjoint means with respect to the $A$-inner product.
\end{lemma}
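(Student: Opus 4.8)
The plan is to characterize $U_D\cap\SA$ by unwinding the last displayed criterion \eqref{SA}, namely that $D'\in\SA$ iff $A(D')=(D')^\perp$, for $D'=\graph T$. First I would compute $(\graph T)^\perp$: since $D\perp D^\perp$ in the $A$-inner product, a vector $w=w_0+w_1$ with $w_0\in D$, $w_1\in D^\perp$ is orthogonal to every $u+Tu$ ($u\in D$) exactly when $(u,w_0)_A+(Tu,w_1)_A=0$ for all $u\in D$; since $Tu$ ranges over (a subspace of) $D^\perp$ and $T^*:D^\perp\to D$ denotes the $A$-adjoint of $T$, this says $w_0=-T^*w_1$, so $(\graph T)^\perp=\graph(-T^*)=\set{-T^*w_1+w_1:w_1\in D^\perp}$. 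Next I would compute $A(\graph T)$: using $A|_\Sing$ maps $D$ isometrically onto $D^\perp$ and $D^\perp$ onto $D$ (by \eqref{SA} applied to $D\in\SA$), $A(u+Tu)=Au+ATu$ with $Au\in D^\perp$ and $ATu\in D$, so $A(\graph T)=\set{ATu+Au:u\in D}$, which we may reparametrize by $w_1:=Au\in D^\perp$ (as $u$ ranges over $D$), giving $u=-Aw_1$ and hence $A(\graph T)=\set{-ATAw_1+w_1:w_1\in D^\perp}$.

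Now $\graph T\in\SA$ iff these two graphs over $D^\perp$ coincide, i.e. iff $-ATA=-T^*$ as maps $D^\perp\to D$, equivalently $ATA=T^*$. The final step is to convert this into the stated condition that $AT:D\to D$ be selfadjoint. Write $S=AT:D\to D$. Using $A^2=-\Id$ on $\Sing$ we get $T=-AS$ and $T^*$ (the $A$-adjoint of $T:D\to D^\perp$) equals $-A^{-1}\mathbin{}$-adjusted: more carefully, for $u\in D$, $w_1\in D^\perp$, $(Tu,w_1)_A=(u,T^*w_1)_A$; substituting $T=-AS$ and using that $A$ is a skew-adjoint isometry on $\Sing$ (from \eqref{GreenAndInnerProduct}, $A^2=-\Id$ so $(Au,v)_A=-(u,Av)_A$), one finds $T^*w_1=A S^* A w_1$ where $S^*$ is the $A$-adjoint of $S$ on $D$; hence the condition $ATA=T^*$ becomes $ATA=AS^*A$, i.e. $T=S^*$, i.e. $AT=AS^*$; but also $AT=S$ by definition, so the condition is precisely $S=S^*$, i.e. $AT:D\to D$ is $A$-selfadjoint. (One must double-check the bookkeeping of which $A$-adjoint is meant, since $T$ goes $D\to D^\perp$ while $S$ goes $D\to D$; the isometry property \eqref{GreenAndInnerProduct} and $A^2=-\Id$ are exactly what make the two adjoints correspond.)

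The main obstacle I anticipate is purely the careful tracking of adjoints across the isometry $A|_\Sing$: there are three different "adjoint" operations in play — the $A$-orthogonal complement in $\Sing$, the $A$-adjoint of $T:D\to D^\perp$, and the $A$-adjoint of $AT:D\to D$ — and the proof hinges on the identities $A^2|_\Sing=-\Id$ and $(Au,v)_A=-(u,Av)_A$ (equivalently \eqref{GreenAndInnerProduct}) to reconcile them. Once the computation of $(\graph T)^\perp$ and $A(\graph T)$ as graphs over $D^\perp$ is set up correctly, the rest is a short algebraic manipulation, so I would organize the writeup around those two computations and then a one-line comparison. I would also note at the start that $U_D\cap\SA\subset U_D$ already forces the relevant subspaces to be $d$-dimensional graphs, so no dimension count is needed beyond what $U_D$ provides.
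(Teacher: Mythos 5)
Your route through \eqref{SA} in the form $A(\graph T)=(\graph T)^\perp$ — computing both sides as graphs over $D^\perp$ and comparing slopes — is valid and genuinely different from the paper's, which instead plugs $u+Tu$, $v+Tv$ directly into the bilinear criterion $(u+Tu,A(v+Tv))_A=0$ and kills the two extreme terms using $D,D^\perp\in\SA$. Your two graph computations,
$(\graph T)^\perp=\set{-T^*w_1+w_1:w_1\in D^\perp}$ and $A(\graph T)=\set{-ATAw_1+w_1:w_1\in D^\perp}$, are correct, so the criterion does reduce to $T^*=ATA$ as maps $D^\perp\to D$.

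The final translation into $A$-selfadjointness of $S=AT$, however, contains a bookkeeping slip. With $T=-AS$ and $(Ax,y)_A=-(x,Ay)_A$ on $\Sing$, for $u\in D$ and $w_1\in D^\perp$ one gets
\[
(Tu,w_1)_A=-(ASu,w_1)_A=(Su,Aw_1)_A=(u,S^*Aw_1)_A,
\]
so $T^*=S^*A:D^\perp\to D$, \emph{not} $AS^*A$ (the latter would map $D^\perp\to D^\perp$, a type mismatch). Likewise $ATA=A(-AS)A=-A^2SA=SA$. The condition $T^*=ATA$ therefore reads $S^*A=SA$, and since $A|_{D^\perp}:D^\perp\to D$ is invertible this gives $S^*=S$, i.e.\ $AT$ is $A$-selfadjoint — so your conclusion is right, but the chain you wrote ($ATA=AS^*A$, hence $T=S^*$, hence $AT=AS^*$) equates maps with mismatched target spaces ($T$ lands in $D^\perp$ while $S^*$ lands in $D$) and should be replaced by the computation above.
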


Since $A|_\Sing$ is unitary, if $T:D\to D^\perp$ is such that $AT:D\to D$ is selfadjoint, then also $TA:D^\perp\to D^\perp$ is selfadjoint.

\begin{proof}
Let $D\in\SA$, let $T:D\to D^\perp$ be a linear map. In view of \eqref{SA}, the condition that $\graph T \in \SA$ is that 
\begin{equation*}
(u+Tu,A(v+Tv))_A=0 \text{ for all }u,v\in D
\end{equation*}
For a general $T:D\to D^\perp$ and $u,v\in D$ we have
\begin{equation*}
(u+Tu,A(v+Tv))_A = (u,Av)_A+(u,ATv)_A+(Tu,Av)_A+(Tu,ATv)_A.
\end{equation*}
Since $D\in \SA$ and $u,v\in D$, $(u,Av)_A=0$, and since $Tu,Tv\in D^\perp$ and $D^\perp\in \SA$, also $(Tu,ATv)_A=0$. Further, since $A$ is an isometry on $\Sing$ and $A^2=-\Id$, $(Tu,Av)_A=-(ATu,v)$. Thus 
\begin{equation*}
(u+Tu,A(v+Tv))_A=(u,ATv)_A-(ATu,v)_A
\end{equation*}
so $\graph T\in \SA$ iff $AT:D\to D$ is selfadjoint with respect to the $A$-inner product. 
\end{proof}

\medskip
Thus $\SA$, as a subset of $\Gr_d(\Sing)$, is structurally simple:

\begin{proposition}[\cite{GiKrMe-a} Proposition 6.3]\label{SAisAManifold}
The set $\SA$ is a smooth real-algebraic subvariety of $\Gr_d(\Sing)$.
\end{proposition}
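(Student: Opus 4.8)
The plan is to read off both assertions from explicit descriptions of $\SA$: a local one, for smoothness, coming directly from Lemma~\ref{TheSAspaces}, and a global one in terms of orthogonal projections, for the algebraicity. Throughout, an unadorned $*$ denotes the adjoint with respect to the inner product $(\cdot,\cdot)_A$ (restricted to whatever subspace is under consideration). For smoothness, fix $D_0\in\SA$ and use the standard affine chart $U_{D_0}\cong\mathrm{Hom}(D_0,D_0^\perp)$ of $\Gr_d(\Sing)$, in which $D_0$ itself corresponds to $T=0$. By Lemma~\ref{TheSAspaces}, $\graph T\in\SA$ if and only if $AT\colon D_0\to D_0$ is selfadjoint for $(\cdot,\cdot)_A|_{D_0}$ (a genuine inner product, since $(\cdot,\cdot)_A$ is positive definite on $\Sing$). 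Because $D_0\in\SA$, \eqref{SA} gives $A(D_0^\perp)=D_0$, so $T\mapsto AT$ is an $\R$-linear isomorphism of $\mathrm{Hom}(D_0,D_0^\perp)$ onto $\mathrm{Hom}(D_0,D_0)$, and the selfadjointness condition is the vanishing of the $\R$-linear map $T\mapsto AT-(AT)^*$. Hence $\SA\cap U_{D_0}$ is a linear subspace of the coordinate space; in particular it is a smooth submanifold of $U_{D_0}$, and, being cut out by linear equations, a real-algebraic subvariety of that chart. Since $D_0\in U_{D_0}$, these charts cover $\SA$, which is therefore a smooth submanifold of $\Gr_d(\Sing)$; it is closed, hence compact. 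Counting dimensions, the selfadjoint operators on the $d$-dimensional Hermitian space $(D_0,(\cdot,\cdot)_A)$ form a real vector space of dimension $d^2$, so $\dim\SA=d^2$.

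For algebraicity, identify $D\in\Gr_d(\Sing)$ with the orthogonal projection $P_D\in\mathrm{End}(\Sing)$ onto $D$; this realizes $\Gr_d(\Sing)$ as the real-algebraic subset $\{P\in\mathrm{End}(\Sing):P^2=P,\ P^*=P,\ \operatorname{tr}P=d\}$ of the linear space $\mathrm{End}(\Sing)$, with $P_{D^\perp}=\Id-P_D$. Since $A|_\Sing$ is an isometry with $A^{-1}=-A$, it is unitary with $A^*=A^{-1}=-A$, and $A^2=-\Id$; hence the orthogonal projection onto $A(D^\perp)$ equals $AP_{D^\perp}A^*=-A(\Id-P_D)A=\Id+AP_DA$. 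By \eqref{SA}, $D\in\SA$ exactly when $A(D^\perp)=D$, i.e. when
\begin{equation*}
AP_DA-P_D+\Id=0.
\end{equation*}
This is a quadratic polynomial constraint on the entries of $P_D$, so $\SA$ is the intersection of the image of $\Gr_d(\Sing)$ with an affine real-algebraic subvariety of $\mathrm{End}(\Sing)$; therefore $\SA$ is a real-algebraic subvariety of $\Gr_d(\Sing)$.

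The only point requiring a little care is that the two descriptions are compatible — that in the chart $U_{D_0}$ the linear slice obtained in the first part coincides with the zero set of $AP_DA-P_D+\Id$ obtained in the second — which is a routine computation using the correspondence $\graph T\leftrightarrow P_{\graph T}$ together with $A^2=-\Id$. Alternatively one may bypass projections altogether: the transition maps between affine charts of a Grassmannian are rational, so the local linear description of $\SA$ already exhibits it as a real-algebraic subvariety. I expect no genuine obstacle in the proposition itself; all of the real content has been packaged into Lemma~\ref{TheSAspaces}, and what remains is bookkeeping.
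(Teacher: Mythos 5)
Your proof is correct, and the first half is essentially the argument the paper intends: the paper states the proposition by citation to \cite{GiKrMe-a}, but the surrounding material (Lemma~\ref{TheSAspaces} just before, and the dimension count ``selfadjoint operators $D\to D$ form a real vector space of dimension $d^2$'' just after) makes clear that the intended proof is to use the chart $U_{D_0}\cong\mathrm{Hom}(D_0,D_0^\perp)$ and observe that the selfadjointness condition on $AT$ is $\R$-linear in $T$, so $\SA\cap U_{D_0}$ is a linear slice. Your second half --- realizing $\Gr_d(\Sing)$ inside $\mathrm{End}(\Sing)$ via orthogonal projections and showing $\SA$ is cut out by the quadratic equation $AP_DA-P_D+\Id=0$ --- is a genuine addition that the paper leaves implicit in its citation. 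The computation checks out ($A^*=A^{-1}=-A$, $A^2=-\Id$, $P_{D^\perp}=\Id-P_D$ give $AP_{D^\perp}A^*=\Id+AP_DA$), and this global picture is what cleanly establishes both algebraicity and, as a free by-product, that $\SA$ is closed in $\Gr_d(\Sing)$.

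A minor presentational caveat: in the local argument you assert ``it is closed, hence compact'' before the global description is in place, but the local charts $U_D$ with $D\in\SA$ only cover a neighborhood of $\SA$, so closedness does not follow from them alone. Since the projection-based description immediately supplies closedness, this is merely an ordering issue rather than a gap; one could also argue directly that $\SA$ is the zero set of the continuous map $D\mapsto A(D^\perp)-D$ on $\Gr_d(\Sing)$, or invoke your rational-transition-map remark more carefully. Everything else is sound.
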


The dimension of the vector space of selfadjoint operators $D\to D$ (a real vector space) is $d^2$, so $\SA$ is a real submanifold of $\Gr_d(\Sing)$ of dimension $d^2$.

\begin{lemma}[\cite{GiKrMe-a} Proposition 6.4]\label{AllLambdaInSomeSpec}
Every $\lambda\in \R$ appears as eigenvalue of some selfadjoint extension of $A$. 
\end{lemma}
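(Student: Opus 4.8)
The plan is to produce, for a given $\lambda\in\R$, a selfadjoint domain $\Dom=D+\Dom_{\min}$ containing a nonzero solution of $Au=\lambda u$, and everything rests on the kernel $\K_\lambda=\ker(A_{\Dom_{\max}}-\lambda)$. The first step is to check that $\K_\lambda$ is never trivial. Since the inclusion $\Dom_{\max}\embed H$ is compact, the map $u\mapsto\lambda u$ is compact as an operator $\Dom_{\max}\to H$, so $A_{\Dom_{\max}}-\lambda$ is Fredholm with the same index as $A_{\Dom_{\max}}$, namely $\Ind A_{\Dom_{\max}}=-\Ind A_{\Dom_{\min}}=d$. As $d\ge 1$ (because $\dim\Sing=2d>0$) and the cokernel has nonnegative dimension, $\dim\K_\lambda\ge d\ge 1$ for every real (indeed every complex) $\lambda$.

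The second step is the observation that $K_\lambda:=\pi_{\max}\K_\lambda\subset\Sing$ is isotropic for the Green form $[\cdot,\cdot\cdot]_A$: for $u,u'\in\K_\lambda$ one has $[u,u']_A=(Au,u')-(u,Au')=(\lambda-\overline\lambda)(u,u')=0$ because $\lambda$ is real, while $[u,u']_A=[\pi_{\max}u,\pi_{\max}u']_A$. On the other hand, combining \eqref{SA} with the identity $[u,v]_A=-(u,Av)_A$ on $\Sing$ shows that the elements of $\SA$ are precisely the $d$-dimensional subspaces of $\Sing$ on which $[\cdot,\cdot\cdot]_A$ vanishes, i.e. the Lagrangian subspaces of this non-degenerate skew-Hermitian form. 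Since $\SA$ is nonempty (it contains $D_F$), the form has neutral signature, so by the standard linear-algebra fact that every isotropic subspace of such a form is contained in a Lagrangian one, there is $D\in\SA$ with $K_\lambda\subset D$.

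To conclude, fix any nonzero $u\in\K_\lambda$ and set $\Dom=D+\Dom_{\min}$. The orthogonal splitting $\Dom_{\max}=\Sing\oplus\Dom_{\min}$ gives $u=\pi_{\max}u+(u-\pi_{\max}u)$ with $\pi_{\max}u\in K_\lambda\subset D$ and $u-\pi_{\max}u\in\Dom_{\min}$, so $u\in\Dom$; since $Au=\lambda u$ and $u\neq0$, $\lambda\in\spec(A_\Dom)$. (If it happens that $K_\lambda=0$, then $\K_\lambda\subset\Dom_{\min}$ and this $u$ is already an eigenvector of $A_{\Dom_{\min}}$, hence of every selfadjoint extension; the argument above still applies verbatim with $D$ taken to be any element of $\SA$.) The two substantive points are the index bookkeeping that keeps $\K_\lambda\neq0$ for all $\lambda$ — which is exactly where compactness of $\Dom_{\max}\embed H$ is used — and the enlargement of an isotropic subspace of the Green form to an element of $\SA$ containing it; the remainder is just the orthogonal decomposition of $\Dom_{\max}$. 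I do not anticipate a serious obstacle beyond keeping this linear algebra and the index count straight, the only mild subtlety being that neutrality of the Green form must be invoked, and this is guaranteed precisely by $\SA\neq\emptyset$.
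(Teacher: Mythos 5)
Your proof is correct, and like the paper it passes through $\K_\lambda=\ker(A_{\Dom_{\max}}-\lambda)$ and the isotropy of $K_\lambda=\pi_{\max}\K_\lambda$ for the Green form. The route differs in one respect: the paper first splits off the trivial case $\ker(A_{\Dom_{\min}}-\lambda)\ne 0$, and in the remaining case shows that $\dim K_\lambda$ is \emph{exactly} $d$ — because $\K_\lambda\cap\Dom_{\min}=0$ forces $\dim K_\lambda=\dim\K_\lambda$, and injectivity of $A_{\Dom_{\min}}-\lambda$ forces surjectivity of $A_{\Dom_{\max}}-\lambda$, so $\dim\K_\lambda=\Ind(A_{\Dom_{\max}}-\lambda)=d$. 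That makes $K_\lambda$ itself Lagrangian and hence already an element of $\SA$; there is nothing left to do. You instead content yourself with the lower bound $\dim\K_\lambda\geq d$ and then invoke Witt extension (using neutrality of the Green form, which $\SA\neq\emptyset$ guarantees) to enlarge the isotropic $K_\lambda$ to a Lagrangian $D$. This is valid, and arguably more robust since you never need to know $\dim K_\lambda$ precisely, but it imports a small amount of machinery that the exact dimension count renders unnecessary. One tiny remark: the parenthetical worry about $K_\lambda=0$ never occurs — the index bookkeeping already gives $\dim K_\lambda=d>0$ in all cases, since the cokernel of $A_{\Dom_{\max}}-\lambda$ has dimension $\dim\ker(A_{\Dom_{\min}}-\lambda)=\dim(\K_\lambda\cap\Dom_{\min})$, so $\dim K_\lambda=\dim\K_\lambda-\dim(\K_\lambda\cap\Dom_{\min})=d$. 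Also worth noting: the paper concludes selfadjointness of $A_{K_\lambda+\Dom_{\min}}$ from symmetry plus $\Ind=0$ rather than from the Lagrangian characterization of $\SA$, but these are the same fact in two languages.
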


\begin{proof}
Let $\lambda\in \R$. If $\ker(A_{\Dom_{\min}}-\lambda)\ne 0$, then $\lambda\in \spec(A_{D+\Dom_{\min}})$ for every $D\in \SA$, so the lemma holds in this case. Suppose now that $A_{\Dom_{\min}}-\lambda$ is injective and let $\K_\lambda=\ker(A_{\Dom_{\max}}-\lambda)$. Then $\K_\lambda\cap \Dom_{\min}=0$, so $K_\lambda=\pi_{\max}\K_\lambda$ has the same dimension as $\K_\lambda$. 
The injectivity of $A_{\Dom_{\min}}-\lambda$ implies the surjectivity of its adjoint, $A_{\Dom_{\max}}-\lambda$, so the index of the latter, namely $d$, is equal to the dimension of its kernel. So $K_\lambda\in \Gr_d(\Sing)$. Let $\Dom=K_\lambda+\Dom_{\min}$. To verify that $K_\lambda\in \SA$ let $u,v\in \K_\lambda$ and $u_0, v_0\in \Dom_{\min}$ (note that $\Dom=\K_\lambda+\Dom_{\min}$). Then $[u+u_0,v+v_0]_A=[u,v]_A$ using that the Hilbert space adjoint of $A_{\Dom_{\min}}$ is $A_{\Dom_{\max}}$ and that $A_{\Dom_{\min}}$ is symmetric. So
\begin{equation*}
[u+u_0,v+v_0]_A=(u,Av)-(Au,v)=(u,\lambda v)-(\lambda u,v)=0
\end{equation*}
since $\lambda\in \R$. It follows that $A_\Dom$ is symmetric, and from this and $\Ind A_\Dom = 0$, that $A$ is selfadjoint.
\end{proof}

We end with the following fundamental fact:

\begin{theorem}\label{AllBddBelow}
Let $m$ be a lower bound of $A_c$. Every selfadjoint extension of $A_c$ is semibounded from below and the part of its spectrum in $(-\infty,m)$ is discrete with at most $d$ eigenvalues counting multiplicity. 
\end{theorem}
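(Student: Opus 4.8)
The plan is to exploit the decomposition $\Dom=D+\Dom_{\min}$ together with the finite-dimensionality of $\Sing$. Let $m$ be a lower bound for $A_c$, so that $(A_c u,u)\ge m\|u\|^2$ for $u\in\Dom_c$, and hence, by continuity of both sides in the graph norm, $(A_{\Dom_{\min}}u,u)\ge m\|u\|^2$ for all $u\in\Dom_{\min}$. Now fix a selfadjoint extension $A_\Dom$ with $\Dom=D+\Dom_{\min}$, $D\in\SA$. The key point is that for $u\in\Dom$, write $u=u_D+u_0$ with $u_D\in D$ (a fixed $d$-dimensional space) and $u_0\in\Dom_{\min}$; I would estimate the quadratic form $(A_\Dom u,u)$ from below by the ``interior'' term $(A_{\Dom_{\min}}u_0,u_0)\ge m\|u_0\|^2$ plus cross terms and a term on $D$, controlling the finitely many bad directions coming from $D$. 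Because $D$ is finite dimensional and the graph norm dominates the $H$-norm, the cross terms and the $D$-term are bounded relative to $\|u_0\|_{A}$ in a way that produces, after absorbing, a bound $(A_\Dom u,u)\ge m\|u\|^2 - C\|u\|^2$ for some constant $C=C(\Dom)$; this already shows $A_\Dom$ is semibounded from below.

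For the sharper statement about $(-\infty,m)$ I would use the min–max principle. Suppose, for contradiction, that $A_\Dom$ has $d+1$ eigenvalues (with multiplicity) strictly below $m$; let $V\subset\Dom$ be the corresponding $(d+1)$-dimensional spectral subspace, so that $(A_\Dom u,u)< m\|u\|^2$ for every nonzero $u\in V$ (using discreteness of the spectrum below $m$, which follows from compactness of $\Dom\embed H$). Since $\dim V=d+1>d=\dim D$, the projection $V\to\Sing/\!\sim$ along $\Dom_{\min}$, equivalently $u\mapsto \pi_{\max}u$ restricted to... wait — more directly: the map $V\ni u\mapsto u_D\in D$ (the $D$-component in $\Dom=D\oplus\Dom_{\min}$ as vector spaces) has a nontrivial kernel, so there is a nonzero $u\in V\cap\Dom_{\min}$. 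But then $(A_\Dom u,u)=(A_{\Dom_{\min}}u,u)\ge m\|u\|^2$, contradicting $(A_\Dom u,u)<m\|u\|^2$. Hence at most $d$ eigenvalues lie below $m$.

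It remains to justify that the part of $\spec(A_\Dom)$ in $(-\infty,m)$ is discrete, i.e.\ that $m$ can at worst be an accumulation point from above. This is where compactness of the inclusion $\Dom_{\max}\embed H$ is used: since $A_\Dom$ is selfadjoint and its resolvent, where it exists, is compact (as noted after \eqref{LeschRelIndex}), $A_\Dom$ has purely discrete spectrum with eigenvalues of finite multiplicity and no finite accumulation point, so in particular only finitely many eigenvalues can lie in the closed half-line $(-\infty,m-\epsilon]$ for each $\epsilon>0$; combined with the min–max bound this gives at most $d$ eigenvalues in $(-\infty,m)$, each of finite multiplicity.

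The main obstacle is the semiboundedness estimate itself: one must pass from the inequality on $\Dom_c$ (or $\Dom_{\min}$) to a lower bound for the full form on $\Dom$, and the honest way to handle the cross terms between $u_0\in\Dom_{\min}$ and $u_D\in D$ is to note that $(A u_0, u_D)-(u_0,Au_D)=[u_0,u_D]_A=0$ since $u_0\in\Dom_{\min}$, so that $(A_\Dom u,u)=(A_{\Dom_{\min}}u_0,u_0)+2\Re(u_0,Au_D)+(Au_D,u_D)$, and then bound $|2\Re(u_0,Au_D)|\le \tfrac12\|u_0\|^2 + 2\|Au_D\|^2$ with $\|Au_D\|^2\le C\|u_D\|_A^2$ and $\|u_D\|_A$ controlled on the finite-dimensional space $D$; I expect this to work but it is the only place requiring care. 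Alternatively, since the paper cites \cite[Theorem 7, pg.~217]{BirSol87} for semiboundedness, one may simply invoke that and concentrate on the eigenvalue count via the min–max argument above.
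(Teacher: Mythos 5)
Your second paragraph contains a correct, self-contained proof of the entire theorem, and in fact a more elementary one than the paper's: the paper only verifies that the deficiency indices of $A_c$ are finite and equal, and then cites \cite[Theorem~7, p.~217]{BirSol87}. Your dimension-count argument --- if there were $d+1$ eigenvalues (with multiplicity) strictly below $m$, their span $V\subset\Dom$ would satisfy $(A_\Dom u,u)<m\|u\|^2$ for all nonzero $u\in V$, yet $\dim V>d=\dim D$ forces $V\cap\Dom_{\min}\neq 0$, contradicting $(A_{\Dom_{\min}}u,u)\ge m\|u\|^2$ --- is exactly the deficiency-index counting that underlies the Birman--Solomjak statement, worked out directly from the paper's decomposition $\Dom=D\oplus\Dom_{\min}$. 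This buys you a proof that is both shorter than unwinding the reference and tied explicitly to the paper's own notation for $D$ and $d$.

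Two remarks. First, your opening paragraph is both unnecessary and, as written, broken: from $(A_\Dom u,u)=(Au_0,u_0)+2\Re(u_0,Au_D)+(Au_D,u_D)\ge m\|u_0\|^2-\tfrac12\|u_0\|^2-C\|u_D\|_A^2$ one cannot deduce $(A_\Dom u,u)\ge (m-C')\|u\|^2$, because neither $\|u_0\|$ nor $\|u_D\|_A$ is controlled by $\|u\|$. The splitting $u=u_D+u_0$ is orthogonal in $\Dom_{\max}$, not in $H$, so $u_D$ and $u_0$ can nearly cancel in $H$, making $\|u\|$ arbitrarily small while $\|u_D\|_A$ and $\|u_0\|$ stay of unit size; then the right-hand side of your estimate can be a large negative multiple of $\|u\|^2$. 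You correctly flagged this as the delicate point, and it is not repairable by the naive Cauchy--Schwarz absorption. Second, you do not need it: once the spectrum is known to be discrete (compact resolvent, as you observe), the conclusion ``at most $d$ eigenvalues below $m$'' from the min-max argument already implies that the spectrum is bounded below, hence $A_\Dom$ is semibounded. So the cleanest version of your proof is: compact resolvent $\Rightarrow$ purely discrete spectrum; the dimension count $\Rightarrow$ at most $d$ eigenvalues in $(-\infty,m)$; therefore the spectrum is bounded below and the stated properties hold.
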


This is \cite[Theorem 7, pg.~217]{BirSol87}. Indeed, in view of the semiboundedness of \eqref{InitialDef}, all we need to verify is that the deficiency indices of $A_c$ are finite and equal. Since $A_c$ is semibounded from below, $A_{\Dom_{\min}}-\lambda$ is injective if $\Im\lambda\ne 0$ or $\lambda\in \R$  is sufficiently negative. For such $\lambda$, $\K_\lambda=\ker(A_{\Dom_{\max}}-\lambda)$ has constant dimension $d$, because of \eqref{LeschRelIndex} and the definition of $d$ as $-\Ind A_{\Dom_{\min}}$. In particular, the spaces $\K_\im$ and $\K_{-\im}$ have the same dimension. But these spaces are the orthogonal complements in $H$ of the ranges of $A_{\Dom_{\min}}+\im$ and $A_{\Dom_{\min}}-\im$. We note in passing that both $\K_\im$ and $\K_{-\im}$ are subspaces of $\Sing$, with $\Sing=\K_\im\oplus \K_{-\im}$. This is the decomposition of $\Sing$ into the eigenspaces of the almost complex structure of $\Sing$ determined by $A$.

%%%%%%%%%%%%%%%%%%%%%%%%%%%%%%%
\section{$\Dom$-Sobolev spaces}

Let $A:\Dom\subset H\to H$ be a selfadjoint extension of \eqref{InitialDef}, let 
\begin{equation*}
\Pi_{\Dom,\lambda}:H\to H
\end{equation*}
be the orthogonal projection on $\ker(A_\Dom-\lambda)$. Define, for arbitrary $s\geq 0$,
\begin{equation*}
H^s_{\Dom}=\set{u\in H:\sum_{\lambda\in \spec(A_\Dom)} (1+|\lambda|)^{2s} \|\Pi_{\Dom,\lambda}u\|^2<\infty}.
\end{equation*}
This is a Hilbert space with inner product
\begin{equation*}
(u,v)_s=\sum_{\lambda\in \spec(A_\Dom)} (1+|\lambda|)^{2s}(\Pi_{\Dom,\lambda}u,\Pi_{\Dom,\lambda}v).
\end{equation*}
We will write $\|\cdot \|_s$ for the norm of $H^s_{\Dom}$. We shall not make explicit the dependence on $\Dom$ of the norm or the inner product, and omit $s$ altogether when $s=0$. 

Clearly $H^{s'}_\Dom$ is densely and continuously contained in $H^s_\Dom$ if $s'>s\geq 0$. 

\begin{lemma}\label{HmNormIsANorm}
The spaces $H^1_\Dom$ and $\Dom$ are equal and the $A$-norm on $\Dom$ and the norm of $H^1_\Dom$ are equivalent. The space $\Dom_c$ is contained in $H^s_{\Dom}$ for every $0\leq s\leq 1$, and its closure in $H^1_\Dom$ is $\Dom_{\min}$.
\end{lemma}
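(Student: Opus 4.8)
The plan is to run everything through the spectral decomposition of the selfadjoint operator $A_\Dom$. By the discussion in Section \ref{Basics}, $A_\Dom$ has compact resolvent, hence discrete spectrum consisting of eigenvalues of finite multiplicity, and $\sum_{\lambda\in\spec(A_\Dom)}\Pi_{\Dom,\lambda}=\Id$ in the strong sense. For $u\in\Dom$ one then has $A_\Dom u=\sum_\lambda\lambda\,\Pi_{\Dom,\lambda}u$ and $\|u\|_A^2=\|u\|^2+\|A_\Dom u\|^2=\sum_\lambda(1+\lambda^2)\|\Pi_{\Dom,\lambda}u\|^2$, while membership of $u\in H$ in $\Dom$ is exactly the finiteness of that last sum. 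Since $1+\lambda^2\le(1+|\lambda|)^2\le 2(1+\lambda^2)$ for all $\lambda\in\R$ (the right inequality being $0\le(|\lambda|-1)^2$), this sum is comparable to $\|u\|_1^2=\sum_\lambda(1+|\lambda|)^2\|\Pi_{\Dom,\lambda}u\|^2$. Therefore $H^1_\Dom=\Dom$ as sets, and the $A$-norm on $\Dom$ and the norm of $H^1_\Dom$ are equivalent.

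For the inclusion $\Dom_c\subset H^s_\Dom$ when $0\le s\le1$: because $1+|\lambda|\ge1$ we have $(1+|\lambda|)^{2s}\le(1+|\lambda|)^2$, so $H^1_\Dom$ is continuously contained in $H^s_\Dom$ (this is the monotonicity already recorded before the lemma). Since $\Dom$ is a selfadjoint extension of \eqref{InitialDef}, $\Dom_c\subset\Dom_{\min}\subset\Dom=H^1_\Dom\subset H^s_\Dom$, which gives the claim.

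For the last assertion, I would first note that $\Dom=D+\Dom_{\min}$ with $D$ finite-dimensional and $\Dom_{\min}$ closed in $\Dom_{\max}$, so $\Dom$ is a closed subspace of $(\Dom_{\max},(\cdot,\cdot)_A)$; moreover, for $u\in\Dom\subset\Dom_{\max}$ the vector $A_\Dom u$ is simply $A_{\Dom_{\max}}u$, so the $A$-norm entering the definition of $H^1_\Dom$ is precisely the restriction to $\Dom$ of the graph norm of $\Dom_{\max}$. By the first paragraph, the $H^1_\Dom$-topology on $\Dom$ coincides with this graph topology. Now $\Dom_{\min}$ is by definition the closure of $\Dom_c$ in $(\Dom_{\max},(\cdot,\cdot)_A)$, and it is contained in the $(\cdot,\cdot)_A$-closed set $\Dom$; hence the closure of $\Dom_c$ taken inside $\Dom$, equivalently inside $H^1_\Dom$, is again $\Dom_{\min}$.

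I do not expect a genuine obstacle here: the proof is an exercise in the spectral theorem. The only points demanding care are the identification of the two a priori distinct ``$A$-norms'' on $\Dom$ (the one built from $A_\Dom$ versus the one inherited from $A_{\Dom_{\max}}$), and the observation that $\Dom$ is $(\cdot,\cdot)_A$-closed, which is what makes ``closure of $\Dom_c$ in $\Dom_{\max}$'' and ``closure of $\Dom_c$ in $H^1_\Dom$'' yield the same subspace $\Dom_{\min}$.
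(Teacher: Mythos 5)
Your proof is correct and follows essentially the same route as the paper's: identify $H^1_\Dom$ with $\Dom$ via the spectral theorem, compare $(1+\lambda^2)$ with $(1+|\lambda|)^2$ to get the norm equivalence (the paper records the slightly weaker constants $\tfrac14\|u\|_1^2\le\|u\|_A^2\le\|u\|_1^2$), and then deduce the remaining assertions. Where the paper shows $H^1_\Dom\subset\Dom$ by a partial-sum argument plus closedness of $A_\Dom$, you invoke directly the spectral-theorem characterization of the domain — same content, more compact; and where the paper disposes of the last claim with ``follows from this,'' you have usefully spelled out the two points that make it go through (the agreement of $A_\Dom$ with $A_{\Dom_{\max}}$ on $\Dom$, and the $(\cdot,\cdot)_A$-closedness of $\Dom$ in $\Dom_{\max}$).
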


In particular, $H^1_\Dom\ne\Dom_{\max}$ since $\Dom\ne \Dom_{\max}$. We will write $\dot H^s_\Dom$ for the closure of $\Dom_c$ in $H^s_\Dom$ ($0\leq s\leq 1$). Evidently $\dot H^1_\Dom$ is independent of $\Dom$ (despite the notation), but $\dot H^s_\Dom$ may depend on $\Dom$ if $s<1$.

\begin{proof}
Suppose $v\in H^1_\Dom$, let 
\begin{equation*}
v_n=\sum_{\lambda<n} \Pi_{\Dom,\lambda}(v)
\end{equation*}
and note that
\begin{equation*}
Av_n=\sum_{\lambda<n}\lambda \Pi_{\Dom,\lambda}(v)
\end{equation*}
Since $v\in H$, $v_n\to v$ in $H$, but since in fact $v\in H^1_\Dom$, $Av_n$ also converges in $H$. Since $A_\Dom$ is closed, $v\in \Dom$. Thus $H^1_\Dom\subset\Dom$. The opposite inclusion follows from an application of the Spectral Function Theorem. An explicit calculation gives
\begin{equation*}
\frac{1}{4}\|u\|_1^2\leq \|u\|_A^2 \leq \|u\|_1^2,\quad u\in \Dom.
\end{equation*}
That the closure of $\Dom_c$ in $H^1_\Dom$ is $\Dom_{\min}$ follows from this and that $\Dom_c\subset H^s_\Dom$ for $0\leq s\leq 1$ follows form $H^1_\Dom\subset H^s_\Dom$ for such $s$. 
\end{proof}

Let $H^{-s}_{\Dom^\dag}$ be the dual of $H^s_\Dom$ with the norm topology. Denote the pairing of $\psi\in H^{-s}_{\Dom^\dag}$ and $u\in H^s_\Dom$ by $\langle \psi,u\rangle_s$. Define $h_s^\sharp:H^s_\Dom\to H^{-s}_{\Dom^\dag}$ by setting 
\begin{equation}\label{h_s}
\langle h_s^\sharp v,u\rangle_s=(u,v)_s.
\end{equation}
The Riesz representation theorem gives that the map $h_s^\sharp$ is surjective, so invertible since it is also injective, and an antilinear isometry.  The inverse will be denoted $h_s^\flat$. 

The space $H^{-s}_{\Dom^\dag}$ is again a Hilbert space with inner product
\begin{equation*}
(\psi,\eta)_{-s}=(h_s^\flat \eta, h_s^\flat \psi)_s, \quad \psi, \eta \in H^{-s}_{\Dom^\dag}.
\end{equation*}
The Hilbert space norm of an element  of $H^{-s}_{\Dom^\dag}$ is equal its norm as linear functional $H^s_{\Dom}\to\C$. 

\medskip
Suppose $0\leq s\leq 1$, let $\dot H^{-s}_\Dom$ be the dual of $\dot H^s_\Dom$. The inclusion map 
\begin{equation*}
\iota_s:\dot H^1_\Dom\to H^s_{\Dom}
\end{equation*}
gives the dual map
\begin{equation*}
\iota_s^\dag:H^{-s}_{\Dom^\dag}\to \dot H^{-1}_\Dom.
\end{equation*}
We are interested in the elements of the kernel of these maps. 

The kernel of $\iota_s^\dag$, the annihilator  in $H^{-s}_{\Dom^\dag}$ of the closure of $\dot H^1_{\Dom}$ in $H^s_\Dom$, is isomorphic via $h_s^\flat$ to the orthogonal complement of $\dot H^s_\Dom$ in $H^s_\Dom$, so $\dim\ker \iota_s^\dag=\dim H^s_\Dom/\dot H^s_\Dom$. In particular, $\dim\ker \iota_1^\dag=d$, since by Lemma~\ref{HmNormIsANorm}, $\dot H^1_{\Dom}=\Dom_{\min}$ and $H^1_{\Dom}=D+\Dom_{\min}$.

Suppose $0\leq s<s'\leq 1$, and let $j_{s,s'}:H^{s'}_{\Dom}\embed H^s_{\Dom}$ be the inclusion map. Then $\iota_{s}=j_{s,s'}\circ \iota_{s'}$, so $\iota_{s}^\dag = \iota_{s'}^\dag\circ j_{s,s'}^\dag$. Since $j_{s,s'}$ has dense image, $j_{s,s'}^\dag$ is injective. Consequently $u\in \ker \iota_{s}^\dag$ if and only if $\iota_{s'}^\dag (j_{s,s'}^\dag(u))=0$ and we deduce that $j_{s,s'}^\dag$ restricts to an injective map $\ker \iota_s^\dag\to \ker \iota_{s'}^\dag$. Identifying $H^{-s}_{\Dom^\dag}$ with its image in $H^{-s'}_{\Dom^\dag}$ by $j_{s,s'}^\dag$ this means 
\begin{equation}\label{KerInclusions}
\ker \iota_s^\dag = H^{-s}_{\Dom^\dag}\cap \ker \iota_{s'}^\dag,\quad 0\leq s<s'.
\end{equation}
All that is left is to determine $\ker \iota_1^\dag$.

\begin{proposition}
The kernel of $\iota_1^\dag$ consists of all maps $\delta_u:H^1_\Dom\to \C$ of the form 
\begin{equation}\label{DefDeltau}
H^1_{\Dom}\ni \psi \mapsto \langle\delta_u,\psi\rangle = [\psi,u]_A \in \C.
\end{equation}
for some $u\in D^\perp$. Here, as before, $D^\perp$ is the orthogonal complement of $D$ in $\Sing$.
\end{proposition}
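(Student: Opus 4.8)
The plan is to show the two inclusions: every $\delta_u$ with $u\in D^\perp$ lies in $\ker\iota_1^\dag$, and conversely every element of $\ker\iota_1^\dag$ is of this form; the count $\dim\ker\iota_1^\dag=d=\dim D^\perp$ established just above will then close the argument, so I only need \emph{one} nontrivial inclusion plus injectivity of $u\mapsto\delta_u$.

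First I would check that $\delta_u$ is a bounded functional on $H^1_\Dom$. For $\psi\in\Dom_{\max}$ and $u\in\Sing$ we have $[\psi,u]_A=[\pi_{\max}\psi,u]_A$, and on the finite-dimensional space $\Sing$ the form $[\cdot,\cdot\cdot]_A$ is bounded; since $\pi_{\max}:\Dom_{\max}\to\Sing$ is bounded in the $A$-norm and the $A$-norm is equivalent to $\|\cdot\|_1$ by Lemma~\ref{HmNormIsANorm}, boundedness of $\delta_u$ on $H^1_\Dom$ follows. Next, $\delta_u\in\ker\iota_1^\dag$ means $\langle\delta_u,\psi\rangle=0$ for all $\psi\in\dot H^1_\Dom=\Dom_{\min}$; but $[\psi,u]_A=0$ whenever $\psi\in\Dom_{\min}$ by the very definition of $\Dom_{\min}=\Dom_c$'s closure and the symmetry of $A_{\Dom_{\min}}$, so indeed $\delta_u$ annihilates $\dot H^1_\Dom$. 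Thus $\{\delta_u:u\in D^\perp\}\subseteq\ker\iota_1^\dag$.

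For injectivity of $u\mapsto\delta_u$ on $D^\perp$: if $\delta_u=0$ then $[\psi,u]_A=0$ for all $\psi\in H^1_\Dom=D+\Dom_{\min}$, i.e. for all $\psi\in D$ (the $\Dom_{\min}$ part contributing nothing). Combined with $[\psi,u]_A=0$ for $\psi\in D^\perp$ — which holds because $u\in D^\perp\in\SA$ by \eqref{SA}, so $[\cdot,\cdot\cdot]_A$ vanishes identically on $D^\perp$ — we get $[\psi,u]_A=0$ for all $\psi\in\Sing$. Since the restriction of the Green form to $\Sing$ is non-degenerate, $u=0$. Hence $u\mapsto\delta_u$ is an injection $D^\perp\hookrightarrow\ker\iota_1^\dag$ between spaces of the same dimension $d$, so it is a bijection, which gives the reverse inclusion and completes the proof.

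The only place demanding a little care is the boundedness estimate for $\delta_u$ — but this is routine given the equivalence of norms in Lemma~\ref{HmNormIsANorm} and the continuity of $\pi_{\max}$; everything else is linear algebra on the finite-dimensional space $\Sing$ together with the dimension count already in hand, so I do not expect a genuine obstacle. (One could alternatively prove the reverse inclusion directly: given $\psi^\dag\in\ker\iota_1^\dag\subset H^{-1}_{\Dom^\dag}$, write $\psi^\dag=h_1^\sharp v$ for a unique $v\in H^1_\Dom=\Dom$ orthogonal in the $A$-inner product to $\Dom_{\min}$, hence $v\in\Sing$; then unwinding $(\cdot,v)_1$ in terms of $(A\cdot,Av)+(\cdot,v)$ and using $A^2v=-v$ on $\Sing$ recasts $\langle\psi^\dag,\psi\rangle$ as $[\psi,u]_A$ for $u=-Av$, and $\psi^\dag\in\ker\iota_1^\dag$ forces $u\in D^\perp$ via \eqref{GreenAndInnerProduct}. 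I would keep the dimension-count version as the primary argument since it is shorter.)
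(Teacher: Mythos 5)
Your primary argument is correct and follows the same structure as the paper's proof: show $\delta_u$ is a continuous functional annihilating $\dot H^1_\Dom$, prove injectivity of $u\mapsto\delta_u$, then invoke the dimension count $\dim\ker\iota_1^\dag=d$ for surjectivity. Your injectivity step is phrased a little differently --- you combine vanishing of $[\cdot,u]_A$ on $D$ (from $\delta_u=0$) with its vanishing on $D^\perp$ (from $D^\perp\in\SA$) and then use nondegeneracy of the Green form on $\Sing$, whereas the paper observes that $\delta_u=0$ places $u$ in the domain of the adjoint $\Dom^*=\Dom$, whence $u\in D\cap D^\perp=0$ --- but these routes are logically equivalent.

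One caution about your parenthetical alternative: $h_1^\sharp$ is defined via the spectrally weighted inner product $(\cdot,\cdot)_1$ on $H^1_\Dom$, not the $A$-inner product. These norms are equivalent but the inner products are not equal, so the $v=h_1^\flat\psi^\dag$ you obtain is $(\cdot,\cdot)_1$-orthogonal to $\Dom_{\min}$, which does not place $v$ in $\Sing$ (which is the \emph{$A$-orthogonal} complement of $\Dom_{\min}$). The direct argument is repairable by setting up the Riesz isomorphism with respect to the $A$-inner product instead of $(\cdot,\cdot)_1$, but as written the step ``hence $v\in\Sing$'' does not follow. Since you keep the dimension-count version as the primary argument, this does not affect the validity of your proof.
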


\begin{proof}
Let $u\in D^\perp$. The functional $\delta_u$ is clearly linear. Its continuity as a map $\delta_u:H^1_\Dom\to \C$ is an immediate consequence of the Cauchy-Schwarz inequality, the definition of the $A$-norm and the equivalence of the latter and that of $H^1_\Dom$. If $\psi\in \dot H^1_\Dom$, then $[\psi,u]_A=0$ because $\dot H^1_\Dom=\Dom_{\min}$ and $D^\perp\subset \Dom_{\max}$, so $\delta_u\in \ker \iota_1^\perp$. If $\delta_u=0$, then $(A\psi,u)-(\psi,Au)=0$ for all $\psi\in \Dom$, since $D^\perp+\Dom_{\min}$ is the domain of the adjoint of $A_\Dom$. So $u$ belongs to the domain of the adjoint of $A_\Dom$. But since $A_\Dom$ is selfadjoint, we must have $u\in\Dom$, so $u=0$. So the map
\begin{equation*}
D^\perp\ni u\mapsto \delta_u\in H^{-1}_{\Dom^\dag}
\end{equation*}
is an antilinear isomorphism into $\ker \iota_1^\dag$. The surjectivity follows from the equality of the dimensions of $D^\perp$ and $H^1_\Dom/\dot H^1_\Dom\approx D$.
\end{proof}

%%%%%%%%%%%%%%%%%%%%%%%%%%%%%%%
\section{Estimates}

For $D\in \SA$ we let $\mathcal P_{D^\perp}$ be the collection of functionals \eqref{DefDeltau}:
\begin{equation*}
\mathcal P_{D^\perp}=\set{\delta_u:u\in D^\perp}.
\end{equation*}
Because of \eqref{KerInclusions}, elements of $\mathcal P_{D^\perp}$ may have better regularity (the number $-s$) than $H^{-1}_{\Dom^\dag}$, but of course no element $\delta_u$ with $u\ne 0$ belongs to $H^{0}_{\Dom^\dag}$. The following proposition gives an upper bound for the regularity of elements in $\ker \iota_1^\dag$ in the case where $\Dom$ is the domain of the Friedrichs extension of $A$.

\begin{proposition}\label{DeltaLikeReg} 
Let $\Dom_F=D_F+\Dom_{\min}$ be the domain of the Friedrichs extension of \eqref{InitialDef}. Then $\mathcal P_{\Dom_F^\perp}\cap H^{-1/2}_{\Dom_F^\dag}=0$. 
\end{proposition}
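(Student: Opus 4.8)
The plan is to argue by contradiction: suppose $u\in D_F^\perp$, $u\neq 0$, and $\delta_u\in H^{-1/2}_{\Dom_F^\dag}$. I would first unpack what membership in $H^{-1/2}_{\Dom_F^\dag}$ means in terms of the spectral decomposition of $A_{\Dom_F}$. Writing $\Pi_\lambda=\Pi_{\Dom_F,\lambda}$ and using the antilinear isometry $h_{1/2}^\flat$, the element $\delta_u$, if it lies in $H^{-1/2}_{\Dom_F^\dag}$, is represented by some $w\in H^{1/2}_{\Dom_F}$, i.e. $\langle\delta_u,\psi\rangle=(\psi,w)_{1/2}=\sum_\lambda (1+|\lambda|)\,(\Pi_\lambda\psi,\Pi_\lambda w)$ for all $\psi\in H^1_{\Dom_F}$. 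On the other hand, the defining formula $\langle\delta_u,\psi\rangle=[\psi,u]_A=(A\psi,u)-(\psi,Au)$ can be evaluated on the spectral "eigen-pieces" $\psi=\Pi_\lambda v$ (for $v$ ranging over $H$, so that $A\psi=\lambda\psi$): this gives $\langle\delta_u,\Pi_\lambda v\rangle=\lambda(\Pi_\lambda v,u)-(\Pi_\lambda v,Au)=(\Pi_\lambda v,\lambda u-Au)$. Comparing the two expressions for $\langle\delta_u,\Pi_\lambda v\rangle$ over all $v$ forces the identity $(1+|\lambda|)\,\Pi_\lambda w=\Pi_\lambda(\lambda u-Au)$, hence $\|\Pi_\lambda w\|=(1+|\lambda|)^{-1}\|\Pi_\lambda(\lambda u-Au)\|$.

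The crux is then a lower bound: $\delta_u\in H^{-1/2}_{\Dom_F^\dag}$ is equivalent to
\begin{equation*}
\sum_{\lambda\in\spec(A_{\Dom_F})}(1+|\lambda|)\,\|\Pi_\lambda w\|^2
=\sum_\lambda (1+|\lambda|)^{-1}\,\|\Pi_\lambda(\lambda u-Au)\|^2<\infty,
\end{equation*}
and since $\lambda u-Au=-(A-\lambda)u$, for large positive $\lambda$ this is comparable to $\sum_\lambda (1+|\lambda|)\,\|\Pi_\lambda u\|^2$, i.e. to $\|u\|_{1/2}^2$ computed with respect to $\Dom_F$. So the contradiction will come from showing $u\notin H^{1/2}_{\Dom_F}$ when $u\in D_F^\perp\setminus\{0\}$; equivalently, $\sum_\lambda(1+|\lambda|)\|\Pi_\lambda u\|^2=\infty$. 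The key structural input is that $\Dom_F$ is the Friedrichs extension, which is characterized as the one whose form domain is $\dot H^{1/2}_{\Dom_F}$ in the smallest possible sense — concretely, I expect $\dot H^{1/2}_{\Dom_F}$ to coincide with the form-domain closure of $\Dom_c$ and, crucially, that $D_F^\perp\cap H^{1/2}_{\Dom_F}=0$. This last fact is exactly the assertion that no nonzero element of $D_F^\perp$ lies in the form domain of the Friedrichs extension, which is essentially the Freudenthal/Friedrichs characterization: the Friedrichs extension is maximal among semibounded selfadjoint extensions in the ordering of form domains, and its form domain meets $\Sing$ only in $D_F$ itself (since $H^{1/2}_{\Dom_F}\cap\Sing$ must be an $[\cdot,\cdot\cdot]_A$-isotropic subspace contained in, hence equal to, $D_F$).

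The main obstacle, and the step requiring the most care, is making the identification $D_F^\perp\cap H^{1/2}_{\Dom_F}=0$ precise in this abstract setting, since $H^{1/2}_{\Dom_F}$ is defined purely spectrally while $D_F$ is defined via the Friedrichs construction; I would bridge these by noting that $H^{1/2}_{\Dom_F}$ is the form domain of $A_{\Dom_F}$ (up to the semiboundedness shift), that $\dot H^{1/2}_{\Dom_F}$ equals the form domain of the Friedrichs extension by its very definition as the closure of $\Dom_c$ in the form norm, and that $H^{1/2}_{\Dom_F}=\dot H^{1/2}_{\Dom_F}\oplus(\text{finite piece})$ where the finite piece, being an $A$-Green-form-isotropic complement living over $D_F$, cannot contain any nonzero vector of $D_F^\perp$ because $[\cdot,\cdot\cdot]_A$ is nondegenerate on $\Sing$ and $D_F$ is Lagrangian. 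Once $u\in D_F^\perp$, $u\neq 0$, is seen to fail $u\in H^{1/2}_{\Dom_F}$, the divergence of $\sum_\lambda(1+|\lambda|)\|\Pi_\lambda u\|^2$ gives the divergence of the series above (the low-$\lambda$ and negative-$\lambda$ terms are a finite sum by Theorem~\ref{AllBddBelow} and contribute nothing to convergence questions), contradicting $\delta_u\in H^{-1/2}_{\Dom_F^\dag}$ and proving $\mathcal P_{\Dom_F^\perp}\cap H^{-1/2}_{\Dom_F^\dag}=0$.
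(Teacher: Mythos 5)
Your spectral reduction---showing that for $u\in D_F^\perp$ the condition $\delta_u\in H^{-1/2}_{\Dom_F^\dag}$ is equivalent (after discarding the finitely many terms with small $\lambda$) to $u\in H^{1/2}_{\Dom_F}$---is correct and consistent with \eqref{deltaU} and \eqref{sNormOfDeltaU}. But the argument you give for the reduced claim $D_F^\perp\cap H^{1/2}_{\Dom_F}=0$ does not hold up. The decomposition $H^{1/2}_{\Dom_F}=\dot H^{1/2}_{\Dom_F}\oplus(\text{finite piece})$ cannot be analyzed by calling the finite piece an ``$A$-Green-form-isotropic complement living over $D_F$'': the Green form $[\cdot,\cdot\,\cdot]_A$ is defined only on $\Dom_{\max}\times\Dom_{\max}$, and $H^{1/2}_{\Dom_F}$ is \emph{not} contained in $\Dom_{\max}$ (it sits strictly between $\Dom_F$ and $H$), so neither the putative complement nor its relation to $D_F^\perp$ is accessible to an isotropy/Lagrangian argument. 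Moreover, if an element of $D_F^\perp$ did happen to lie in $\dot H^{1/2}_{\Dom_F}$, the decomposition you write down would give no contradiction at all. This is a genuine gap.

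The ingredient you correctly single out as the crux is the equality $\dot H^{1/2}_{\Dom_F}=H^{1/2}_{\Dom_F}$, i.e.\ that the form domain of $A_{\Dom_F}$ (which is $H^{1/2}_{\Dom_F}$ by the spectral theorem) coincides with the form-closure of $\Dom_c$ (which is $\dot H^{1/2}_{\Dom_F}$) \emph{because} $\Dom_F$ is the Friedrichs domain. The paper proves this cleanly by running the Friedrichs construction: with $\Q(u,v)=(Au,v)+c(u,v)$, the associated operator $B$ has image $\Dom_F$ inside the $\Q$-completion $\dot H^{1/2}_{\Dom_F}$, and density of $\Dom_F=H^1_{\Dom_F}$ in $H^{1/2}_{\Dom_F}$ forces the two half-order spaces to agree. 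Once this is in hand your spectral translation is superfluous and actually complicates matters, because it obliges you to prove $D_F^\perp\cap H^{1/2}_{\Dom_F}=0$ separately: the direct route is simply that any $\delta_u\in H^{-1/2}_{\Dom_F^\dag}$ is a bounded functional on $H^{1/2}_{\Dom_F}$ vanishing on $\Dom_c$, hence on $\dot H^{1/2}_{\Dom_F}=H^{1/2}_{\Dom_F}$, so $\delta_u=0$ and therefore $u=0$ (this is precisely \eqref{KerInclusions} applied with $s=1/2$). In short: the spectral computation is a correct but unnecessary detour; the form-domain identification is the actual content and coincides with the paper's argument, but you assert it rather than establish it; and the Lagrangian/isotropic bridge is invalid.
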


\begin{proof}
We show that $\dot H^{1/2}_{\Dom_F}=H^{1/2}_{\Dom_F}$ (so also $\dot H^s_{\Dom_F}=H^s_{\Dom_F}$ if $0\leq s\leq 1/2$ because of~\eqref{KerInclusions}), an equality we obtain directly by following the construction of the Friedrichs extension of $A$. Let
\begin{equation*}
\Q(u,v)=(Au,v)+c(u,v),\quad u,v \in \dot H^1_\Dom
\end{equation*}
with a large enough constant $c$. The norms on $\dot H^1_{\Dom_F}$ induced by $\Q$ and that of $H^{1/2}_{\Dom_F}$ are equivalent, so the $\Q$-com\-ple\-tion of $\dot H^1_\Dom$ can be identified with $\dot H^{1/2}_{\Dom_F}$. Let
\begin{equation*}
B:H\to \dot H^{1/2}_{\Dom_F}
\end{equation*}
be the operator such that
\begin{equation*}
\Q(Bu,v)=(u,v)\text{ for all } u\in H,\ v\in \dot H^{1/2}_{\Dom_F}.
\end{equation*}
Then $B$ is injective and its image is the domain of the Friedrichs extension of $A+c\Id$, which is the same as that of $A$. That is, $\Dom_F \subset \dot H^{1/2}_{\Dom_F}$, which is to say that $H^1_{\Dom_F}\subset \dot H^{1/2}_{\Dom_F}$. Since $H^1_{\Dom_F}$ is dense in $H^{1/2}_{\Dom_F}$, $\dot H^{1/2}_{\Dom_F}$ is a dense subspace of $H^{1/2}_{\Dom_F}$. Thus $\dot H^{1/2}_{\Dom_F} = H^{1/2}_{\Dom_F}$.
\end{proof}

Returning to the case of an arbitrary domain $\Dom$ on which $A$ is selfadjoint, let $\set{\lambda_k}_{k=1}^\infty$ be the sequence of eigenvalues of $A_\Dom$ repeated according to multiplicity and in increasing order, and let $\set{\psi_k}\subset \Dom$ be an orthonormal basis of $H$ corresponding to these eigenvalues. 

The $\psi_k$ are also a complete $A$-orthogonal system for $\Dom$. Therefore, an element $u\in \Dom_{\max}$ belongs to $D^\perp$ if and only if $(u,\psi_k)_A=0$ for all $k$:
\begin{equation*}
u\in D^\perp\iff \lambda_k(Au,\psi_k)+(u,\psi_k)=0\quad\text{for all }k.
\end{equation*}
Let $u\in D^\perp$. The relations
\begin{equation*}
\left\{\begin{aligned}
\lambda_k(u,\psi_k)-(Au,\psi_k)&=\overline{\langle\delta_u,\psi_k\rangle}\\
(u,\psi_k)+\lambda_k(Au,\psi_k)&=0,
\end{aligned}
\right.
\end{equation*}
where the first identity comes from the definition of $\delta_u$ and the second is the orthogonality condition just mentioned, give
\begin{equation}\label{deltaU}
(u,\psi_k)=\lambda_k\frac{\overline{\langle\delta_u,\psi_k\rangle}}{1+\lambda_k^2},\qquad (Au,\psi_k) = -\frac{\overline{\langle\delta_u,\psi_k\rangle}}{1+\lambda_k^2}.
\end{equation}

We will now express the elements of $\mathcal P_{D^\perp}$ as a Fourier series related to the orthonormal basis $\set{\psi_k}$. Recalling the maps $h_s^\sharp:H^s_\Dom\to H^{-s}_{\Dom^\dag}$ defined in \eqref{h_s}, let $\psi_k^0=h_0^\sharp\psi_k$. Since the inclusion map $j_s:H^s_{\Dom}\embed H^0_{\Dom}$ has dense image, the dual map
\begin{equation*}
j_s^\dag:H^{0}_{\Dom^\dag}\to H^{-s}_{\Dom^\dag}
\end{equation*}
is injective with dense image. So $\psi_k^0$ can be regarded as an element of $H^{-s}_{\Dom^\dag}$ for any $s\geq 0$. From the definition of the inner product we get $(\psi_k^0,\psi_\ell^0)_{0}=\delta_{k\ell}$. For $w\in H^s_{\Dom}$ we have
\begin{equation*}
\langle j_s^\dag \psi_k^0, w\rangle_{s} = \langle  \psi_k^0, j_s w\rangle_{0}=(w,\psi_k)=\frac{(w,\psi_k)_s}{(1+|\lambda_k|)^{2s}}  = \frac{\langle h_s^\sharp\psi_k,w\rangle_s}{(1+|\lambda_k|)^{2s}}
\end{equation*}
so, using the inverse $h_s^\flat$ of $h_s^\sharp$ ,
\begin{equation*}
h_s^\flat(j_s^\dag \psi_k^0) = (1+|\lambda_k|)^{-2s}\psi_k.
\end{equation*}
In particular,
\begin{equation*}
\|j_s^\dag \psi_k^0\|_{-s}^2 = (j_s^\dag \psi_k^0,j_s^\dag \psi_\ell^0)_{-s} =  (1+|\lambda_k|)^{-2s}\delta_{k\ell}.
\end{equation*}
If $v\in H^{-s}_{\Dom^\dag}$, then
\begin{equation*}
(v,j_s^\dag \psi_k^0)_{-s}=(h_s^\flat(j_s^\dag\psi_k^0),h_s^\flat v)_s=\langle v,h_s^\flat(j_s^\dag\psi_k^0)\rangle_s = \frac{\langle v,\psi_k\rangle_s}{(1+|\lambda_k|)^{2s}}.
\end{equation*}
Thus the Fourier series representation of $v$ is
\begin{equation*}
v=\sum_k \langle v,\psi_k\rangle_{s} j_s^\dag\psi_k^0.
\end{equation*}
The norm of an element $v=\sum_k v_k \,j_s^\dag\psi_k^0\in H^{-s}_{\Dom^\dag}$ is given by
\begin{equation*}
\|v\|_{-s}^2=\sum_k (1+|\lambda_k|)^{-2s}|v_k|^2.
\end{equation*}

Suppose now $u\in D^\perp$ and $\delta_u\in H^{-s}_{\Dom^\dag}$. Then
\begin{equation*}
\langle\delta_u,\psi_k\rangle_s = (1+|\lambda_k|)^{2s}(\delta_u,j_m^\dag \psi_k^0)_{-s},
\end{equation*}
hence
\begin{equation}\label{sNormOfDeltaU}
\|\delta_u\|_{-s}^2= \sum \frac{|\langle\delta_u,\psi_k\rangle_{s}|^2}{(1+|\lambda_k|)^{2s}}.
\end{equation}
Note that $\langle\delta_u,\psi_k\rangle_{s}$ is just $\langle\delta_u,\psi_k\rangle$ since $\psi_k\in H^s_{\Dom}$ for any $0\leq s\leq 1$.

%%%%%%%%%%%%%%%%%%%%%%%%%%%%%%%
\section{The bundle of kernels}

The background spectrum of $A$, denoted $\bgspec(A)$ is the set 
\begin{equation*}
\set{\lambda\in \C:A_{\Dom_{\min}}-\lambda\text{ is not injective or }A_{\Dom_{\max}}-\lambda\text{ is not surjective}},
\end{equation*}
see \cite{GiKrMe-a}. Its complement is denoted $\bgres(A)$. The background spectrum is of interest in that it is a subset of the spectrum of every extension of $A$. 

In the present case, since $A$ is semibounded and admits an extension with compact resolvent, the set $\bgspec(A)$ is (if not empty) a discrete subset of the real line with only $+\infty$ as a possible point of accumulation, equal to
\begin{equation*}
\bgspec(A) \\= \set{\lambda\in \C:A_{\Dom_{\min}}-\lambda\text{ is not injective}}.
\end{equation*}
Indeed, if $\lambda\in \R$ then $\ker(A_{\Dom_{\min}}-\lambda)=\rg(A_{\Dom_{\max}}-\lambda)^\perp$.

For $\lambda\in \bgres(A)$ define
\begin{equation*}
\K_\lambda=\ker(A_{\Dom_{\max}}-\lambda).
\end{equation*}
Since $A_{\min}-\lambda$ is injective if $\lambda\in \bgres(A)$, formula  \eqref{LeschRelIndex} with $\Dom=\Dom_{\max}$ gives $\dim\K_\lambda=d$. For these $\lambda$, $\K_\lambda\cap \Dom_{\min}=0$. It follows that $K_\lambda=\pi_{\max}\K_\lambda$ also has dimension $d$ for each $\lambda\in \bgspec(A)$. (These spaces are the fibers of a holomorphic vector bundle over $\bgres(A)$ that extends across $\bgspec(A)$ as a holomorphic vector bundle.)

The following lemma makes explicit the relevancy of these spaces.

\begin{lemma}\label{KcapD}
Let $D\in \Gr_d(\Sing)$. The spectrum of $A$ with domain $\Dom=D+\Dom_{\min}$ is 
\begin{equation*}
\set{\lambda\in \bgspec(A):K_\lambda\cap D\ne 0}\cup\bgspec(A).
\end{equation*}
\end{lemma}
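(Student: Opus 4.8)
The plan is to prove the two inclusions separately, using throughout that $\lambda \in \spec(A_\Dom)$ iff $A_\Dom - \lambda$ fails to be invertible, and that every closed extension here is Fredholm of index zero, so $A_\Dom - \lambda$ is invertible iff it is injective iff it is surjective. First I would dispose of the trivial part: since $\bgspec(A)$ is contained in the spectrum of every closed extension of \eqref{InitialDef} (the defining feature of the background spectrum, recalled just above), we have $\bgspec(A) \subset \spec(A_\Dom)$, which accounts for the second piece of the union. So it remains to analyze $\lambda \in \bgres(A)$ and show that for such $\lambda$, $\lambda \in \spec(A_\Dom)$ iff $K_\lambda \cap D \ne 0$.

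\smallskip
So fix $\lambda \in \bgres(A)$. Then $A_{\Dom_{\min}} - \lambda$ is injective, hence $\K_\lambda \cap \Dom_{\min} = 0$, and $\pi_{\max}$ restricts to an isomorphism $\K_\lambda \to K_\lambda$ (both of dimension $d$, as noted in the excerpt). The key step is to show $\ker(A_\Dom - \lambda) = \K_\lambda \cap \Dom$ and that under $\pi_{\max}$ this space maps isomorphically onto $K_\lambda \cap D$. Indeed, $\ker(A_\Dom - \lambda) = \{u \in \Dom : Au = \lambda u\} = \K_\lambda \cap \Dom$ directly from the definitions. Now I would argue that for $u \in \K_\lambda$, one has $u \in \Dom = D + \Dom_{\min}$ if and only if $\pi_{\max} u \in D$: the inclusion $\K_\lambda \subset \Dom \implies \pi_{\max}\K_\lambda \subset D$ needs the observation that writing $u = u_D + u_0$ with $u_0 \in \Dom_{\min}$ and $u_D \in \Sing$, we get $\pi_{\max} u = u_D$; conversely if $\pi_{\max}u = u_D \in D$ then $u - u_D \in \K_\lambda + \Sing \subset \Dom_{\max}$ lies in $\ker \pi_{\max}\cap\Dom_{\max}$... here I must be a little careful, since $\ker \pi_{\max}$ in $\Dom_{\max}$ is $\Dom_{\min}$, so $u - u_D \in \Dom_{\min}$ and thus $u = u_D + (u - u_D) \in D + \Dom_{\min} = \Dom$. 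Hence $\pi_{\max}$ carries $\K_\lambda \cap \Dom$ bijectively onto $K_\lambda \cap D$ (injectivity because $\pi_{\max}|_{\K_\lambda}$ is injective, surjectivity because any $w \in K_\lambda \cap D$ lifts to a unique $u \in \K_\lambda$ with $\pi_{\max}u = w$, and that $u$ lies in $\Dom$ by what was just shown). Therefore $\ker(A_\Dom - \lambda) \ne 0$ iff $K_\lambda \cap D \ne 0$, and since $A_\Dom - \lambda$ is Fredholm of index zero, $\lambda \in \spec(A_\Dom)$ iff $\ker(A_\Dom-\lambda) \ne 0$ iff $K_\lambda \cap D \ne 0$. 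Combining with the background-spectrum part gives exactly the asserted set.

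\smallskip
The main obstacle, and the step to write out with the most care, is the bookkeeping in the claim that $\pi_{\max}$ restricts to an isomorphism $\K_\lambda \cap \Dom \to K_\lambda \cap D$ — specifically the direction showing $\pi_{\max} u \in D \implies u \in \Dom$, which relies on $u \in \Dom_{\max}$ (true since $\K_\lambda \subset \Dom_{\max}$) together with $\ker(\pi_{\max}|_{\Dom_{\max}}) = \Dom_{\min}$. Everything else is immediate from the Fredholm-index-zero property \eqref{LeschRelIndex} and the definition of $\bgspec(A)$. One should also note explicitly that the statement as written lists the spectrum as a subset of $\bgspec(A)$ unioned with $\bgspec(A)$, i.e. $\{\lambda \in \bgspec(A) : \ldots\} \cup \bgspec(A) = \bgspec(A)$ together with the $\lambda \in \bgres(A)$ satisfying $K_\lambda \cap D \ne 0$; I would phrase the conclusion to match, observing that the first set in the union is contained in the second so the "$K_\lambda \cap D \ne 0$" condition there is vacuous and the real content is over $\bgres(A)$.
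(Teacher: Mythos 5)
Your proof is correct and follows essentially the same route as the paper's: identify $\ker(A_\Dom - \lambda) = \K_\lambda \cap \Dom$, show that $\pi_{\max}$ carries this space isomorphically onto $K_\lambda \cap D$ (using $\ker(\pi_{\max}|_{\Dom_{\max}}) = \Dom_{\min}$), and use the index-zero Fredholm property of $A_\Dom - \lambda$ to convert nonvanishing of the kernel into membership in the spectrum. You also correctly noticed that the first ``$\bgspec(A)$'' in the displayed set should read ``$\bgres(A)$'' (as written the union collapses to $\bgspec(A)$, which cannot be the intent), and what both your argument and the paper's proof actually establish is that $\spec(A_\Dom) = \set{\lambda\in \bgres(A):K_\lambda\cap D\ne 0}\cup\bgspec(A)$.
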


Indeed, if $\lambda\in \spec(A_\Dom)$ and $\lambda\in \bgres(A)$, then $\ker(A_\Dom-\lambda)=\Dom \cap \K_\lambda\ne 0$, and $u\in \ker(A_\Dom-\lambda)$ if and only if $\pi_{\max} u\in K_\lambda$ and $\pi_{\max} u \in D$.
\medskip

Because of the property expressed in the lemma it is of interest to have a formula for the spaces $\K_\lambda$ when $\lambda\notin\bgspec(A)$. We get one such formula with the aid of the resolvent of an arbitrary selfadjoint extension $A_\Dom$ of \eqref{InitialDef}. 

Let then $D\in \SA$, write $\pi_{D^\perp}$, $\pi_D:\Dom_{\max}\to\Dom_{\max}$ for the $A$-orthogonal projections on $D^\perp$ and $D$, respectively, and let $\pi_{\Dom}:\Dom_{\max}\to \Dom_{\max}$ be the orthogonal projection on $\Dom$ (so $\pi_{\Dom}=1-\pi_{D^\perp}$). Let $B_{\Dom}(\lambda)$ be the resolvent of $A_\Dom$. Suppose $\lambda\in \res(A_\Dom)$ and $\phi\in \K_\lambda$. Then
\begin{equation*}
\phi=\pi_{D^\perp}\phi+\pi_{\Dom}\phi
\end{equation*}
gives
\begin{equation*}
0=(A-\lambda) \pi_{D^\perp}\phi+(A-\lambda)\pi_{\Dom}\phi.
\end{equation*}
Applying $B_\Dom(\lambda)$ get
\begin{equation*}
\pi_{\Dom}\phi=-B_\Dom(\lambda)(A-\lambda) \pi_{D^\perp}\phi
\end{equation*}
since $\pi_{\Dom}\phi\in \Dom$. Thus
\begin{equation*}
\phi = \pi_{D^\perp}\phi-B_\Dom(\lambda)(A-\lambda) \pi_{D^\perp}\phi
\end{equation*}
Conversely, it is easily verified that if $u\in D^\perp$, then
\begin{equation*}
\phi_u(\lambda)=u-B_\Dom(\lambda)(A-\lambda)u
\end{equation*}
is an element of $\K_\lambda$ for each $\lambda\in \res(A_\Dom)$. Evidently, the map $D^\perp\ni u\mapsto \phi_u(\lambda)\in \K_\lambda$ is bijective and depends holomorphically on $\lambda\notin \spec(A_\Dom)$.

Using the orthonormal basis $\set{\psi_k}$ consisting of eigenfunctions of $A_\Dom$, the formula
\begin{equation*}
B_\Dom(\lambda)f=\sum_k\frac{(f,\psi_k)}{\lambda_k-\lambda}\psi_k
\end{equation*}
and the formulas \eqref{deltaU} give
\begin{equation*}
\phi_u(\lambda)= u + \sum_k \frac{(1+\lambda\lambda_k)\overline {\langle\delta_u,\psi_k\rangle}}{(1+\lambda_k^2)(\lambda_k-\lambda)}\psi_k,\quad \lambda\notin \spec(A_\Dom);
\end{equation*}
the series converges absolutely and uniformly in $H^1_\Dom$ on compact subsets of $\res(A_\Dom)$. Alternatively, again using \eqref{deltaU} in the expansion of $u$ in terms of the $\psi_k$, we have
\begin{equation}\label{PhiUbis}
\phi_u(\lambda)=\sum_k \frac{\overline {\langle\delta_u,\psi_k\rangle}}{\lambda_k-\lambda} \psi_k,\quad \lambda\notin \spec(A_\Dom).
\end{equation}
This series converges in $H^0_\Dom$ since
\begin{equation*}
\sum_k \frac{|\langle\delta_u,\psi_k\rangle|^2}{(1+\lambda_k)^2}
\end{equation*}
converges (because $\delta_u\in H^{-1}_{\Dom^\dag}$).

%%%%%%%%%%%%%%%%%%%%%%%%%%%%%%%
\section{Negativity and regularity}

We continue our discussion with the selfadjoint operator $A_\Dom$ of the previous section; so $\Dom=D+\Dom_{\min}$ with $D\in \SA$. Let $S:D^\perp\to D^\perp$ be selfadjoint with respect to the $A$-inner product, let $T=AS:D^\perp\to D$, and let 
\begin{equation*}
\graph T=\set{u+Tu:u\in D^\perp},
\end{equation*}
which by Lemma~\ref{TheSAspaces} is an element of $\SA$. Let
\begin{equation*}
\Dom_T=\graph T+\Dom_{\min}.
\end{equation*}
By Lemma \ref{KcapD}, $\lambda\in \bgres(A)$ belongs to $\spec (A_{\Dom_T})$ if and only if $\graph T\cap K_\lambda\ne 0$. In particular, $\lambda\in \res(A_\Dom)$ belongs to $\spec(A_{\Dom_T})$ if and only if there is $u\in D^\perp$, $u\ne 0$, such that 
\begin{equation*}
u-\pi_{\max}B_\Dom(\lambda)(A-\lambda)u=u+Tu,
\end{equation*}
that is, if and only if $-\pi_{\max}B_\Dom(\lambda)(A-\lambda)u=ASu$. Setting
\begin{equation*}
F_{\Dom}(\lambda)=-A\pi_{\max}B_\Dom(\lambda)(A-\lambda)|_{D^\perp},
\end{equation*}
an operator $D^\perp\to D^\perp$ we thus have
\begin{equation}\label{PartSpecD}
\lambda\in \spec(A_{\Dom_T})\cap \res(A_\Dom)\iff F_{\Dom}(\lambda)+S\text{ has nontrivial kernel}.
\end{equation}

\begin{lemma}
The map $F_{\Dom}(\lambda)$ satisfies
\begin{equation}\label{BasicSAofF}
F_{\Dom}(\lambda)^*=F_{\Dom}(\overline \lambda), \quad\lambda\in \res(A_\Dom).
\end{equation}
In addition, for any $\lambda\in \res(A_\Dom)$,
\begin{equation}\label{QF-form}
(F_{\Dom}(\lambda)u,u')_A=\sum_{k=0}^\infty \frac{\overline {\langle\delta_{u},\psi_k\rangle}\langle\delta_{u'},\psi_k\rangle} {1+\lambda_k^2}\frac{1+\lambda\lambda_k}{\lambda_k-\lambda},\quad u,u'\in D^\perp.
\end{equation}
\end{lemma}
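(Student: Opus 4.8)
The plan is to prove the two identities by working directly with the formula
\begin{equation*}
\phi_u(\lambda)=u-B_\Dom(\lambda)(A-\lambda)u,\qquad u\in D^\perp,
\end{equation*}
together with its Fourier expansion \eqref{PhiUbis}. By definition $F_\Dom(\lambda)u=-A\pi_{\max}B_\Dom(\lambda)(A-\lambda)u=A\pi_{\max}(\phi_u(\lambda)-u)$, and since $u\in D^\perp\subset\Sing$ we have $\pi_{\max}u=u$, so $F_\Dom(\lambda)u=A\pi_{\max}\phi_u(\lambda)-Au$. The first step is therefore to compute $(F_\Dom(\lambda)u,u')_A$ for $u,u'\in D^\perp$ by pairing this against $u'$. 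Using that $A|_\Sing$ is a unitary with $A^2=-\Id$, $(A\pi_{\max}\phi_u(\lambda),u')_A=-(\pi_{\max}\phi_u(\lambda),Au')_A=-(\phi_u(\lambda),Au')_A$ (the last because $\pi_{\max}$ is the $A$-orthogonal projection and $Au'\in\Sing$). Then I expand $\phi_u(\lambda)$ via \eqref{PhiUbis} and use $(\psi_k,Au')_A=[\psi_k,u']_A=\overline{[u',\psi_k]}_A=\overline{\langle\delta_{u'},\psi_k\rangle}$... here one must be careful with the conjugation conventions coming from \eqref{DefDeltau} and \eqref{GreenAndInnerProduct}; the relation $(v,Aw)_A=-[v,w]_A$ (a restatement of \eqref{GreenAndInnerProduct} plus $A^2=-\Id$) is the bookkeeping identity to invoke. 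Carrying this through, the term $-(\phi_u(\lambda),Au')_A$ produces $\sum_k \dfrac{\overline{\langle\delta_u,\psi_k\rangle}}{\lambda_k-\lambda}\,\overline{(\psi_k,Au')_A}$, which one then combines with the leftover $-(Au,u')_A$; writing $-(Au,u')_A=[u,u']_A$ in a similar Fourier form and using \eqref{deltaU} to re-express the Fourier coefficients of $u$ and $Au$ in terms of $\langle\delta_u,\psi_k\rangle$, the two contributions merge into the single series
\begin{equation*}
\sum_k \frac{\overline{\langle\delta_u,\psi_k\rangle}\langle\delta_{u'},\psi_k\rangle}{1+\lambda_k^2}\cdot\frac{1+\lambda\lambda_k}{\lambda_k-\lambda},
\end{equation*}
matching \eqref{QF-form}. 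Absolute convergence on compact subsets of $\res(A_\Dom)$ follows from $\delta_u,\delta_{u'}\in H^{-1}_{\Dom^\dag}$ (so $\sum_k|\langle\delta_u,\psi_k\rangle|^2/(1+\lambda_k^2)<\infty$) and the fact that $(1+\lambda\lambda_k)/(\lambda_k-\lambda)$ is bounded in $k$ for fixed $\lambda\notin\spec(A_\Dom)$, together with Cauchy--Schwarz; this handles the convergence caveat.

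For \eqref{BasicSAofF}, the cleanest route is to read it off from \eqref{QF-form} directly: for $u,u'\in D^\perp$,
\begin{equation*}
\overline{(F_\Dom(\lambda)u,u')_A}=\sum_k \frac{\langle\delta_u,\psi_k\rangle\overline{\langle\delta_{u'},\psi_k\rangle}}{1+\lambda_k^2}\cdot\frac{1+\overline\lambda\lambda_k}{\lambda_k-\overline\lambda}=(F_\Dom(\overline\lambda)u',u)_A,
\end{equation*}
using that $\lambda_k\in\R$ so only $\lambda$ gets conjugated, and then swapping the roles of $u$ and $u'$. Since $(F_\Dom(\lambda)u,u')_A=\overline{(F_\Dom(\overline\lambda)u',u)_A}=(u,F_\Dom(\overline\lambda)u')_A$ for all $u,u'$, this is exactly $F_\Dom(\lambda)^*=F_\Dom(\overline\lambda)$ with respect to the $A$-inner product. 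One could alternatively derive it abstractly from the symmetry of $A_\Dom$ and $B_\Dom(\lambda)^*=B_\Dom(\overline\lambda)$, but the series computation is self-contained once \eqref{QF-form} is in hand, so I would prove \eqref{QF-form} first and deduce \eqref{BasicSAofF} from it.

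The main obstacle I anticipate is purely one of bookkeeping rather than of mathematical depth: tracking the antilinearity of $u\mapsto\delta_u$, the conjugate-linearity conventions in \eqref{DefDeltau}, \eqref{h_s} and \eqref{DefDeltau}, and the several sign flips coming from $A^2=-\Id$ and from \eqref{GreenAndInnerProduct} written as $[u,v]_A=-(u,Av)_A$. In particular one must verify that the "leftover" term $-(Au,u')_A$ really does recombine with the $\lambda$-dependent series into the stated rational function $(1+\lambda\lambda_k)/(\lambda_k-\lambda)$ and not, say, its reciprocal or negative; the identity $\dfrac{1}{\lambda_k-\lambda}\cdot 1 + \dfrac{\lambda_k}{\lambda_k-\lambda}\cdot\dfrac{-1}{1}=\dfrac{1+\lambda\lambda_k}{(1+\lambda_k^2)(\lambda_k-\lambda)}\cdot(1+\lambda_k^2)$ — i.e. the partial-fractions step combining the $\psi_k$-coefficient of $u$ (which is $\lambda_k\overline{\langle\delta_u,\psi_k\rangle}/(1+\lambda_k^2)$ by \eqref{deltaU}) with the $\psi_k$-coefficient of $A\pi_{\max}\phi_u(\lambda)$ — is the one arithmetic check I would carry out carefully. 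Once the conventions are pinned down consistently with \eqref{deltaU}, everything else is routine.
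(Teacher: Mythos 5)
Your high-level strategy is close to the paper's (both routes go through $\phi_u(\lambda)$, its Fourier series, and \eqref{deltaU}), and your plan to deduce \eqref{BasicSAofF} directly from \eqref{QF-form} by complex-conjugating the series is a perfectly valid and slightly cleaner alternative to the paper's independent resolvent computation of \eqref{BasicSAofF}. However, there is a genuine gap in the way you propose to reach \eqref{QF-form}.

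The problem is the step where you write
\begin{equation*}
-(\phi_u(\lambda),Au')_A\ \text{``$=$''}\ \sum_k \frac{\overline{\langle\delta_u,\psi_k\rangle}}{\lambda_k-\lambda}\,(\psi_k,Au')_A,
\end{equation*}
that is, expanding $\phi_u(\lambda)$ via \eqref{PhiUbis} and then moving the $A$-inner product past the infinite sum. The series \eqref{PhiUbis} converges in $H=H^0_\Dom$ but not in the graph norm of $\Dom_{\max}$, and the functional $(\cdot,Au')_A$ is \emph{not} continuous on $H$. Indeed, if the partial sums of \eqref{PhiUbis} converged to $\phi_u(\lambda)$ in the $A$-norm they would converge in $\Dom$ (being in $\Dom$ and $\Dom$ being closed), forcing $\phi_u(\lambda)\in\Dom$, which is impossible for $u\ne 0$ and $\lambda\in\res(A_\Dom)$. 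Concretely, one can check $(\psi_k,Au')_A=-\langle\delta_{u'},\psi_k\rangle$, so the term-by-term sum would be $-\sum_k\overline{\langle\delta_u,\psi_k\rangle}\langle\delta_{u'},\psi_k\rangle/(\lambda_k-\lambda)$, which differs from the right-hand side of \eqref{QF-form} by $\sum_k\lambda_k\overline{\langle\delta_u,\psi_k\rangle}\langle\delta_{u'},\psi_k\rangle/(1+\lambda_k^2)$ --- a series that diverges in general (its convergence is precisely the condition $\delta_u,\delta_{u'}\in H^{-1/2}_{\Dom^\dag}$). So the interchange does not merely need careful bookkeeping; it produces the wrong (typically divergent) series. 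Moreover, the ``leftover'' $-(Au,u')_A$ that you hope will repair this is identically zero: $Au\in D$ and $u'\in D^\perp$ are $A$-orthogonal by \eqref{SA}, so there is nothing to combine.

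The fix is exactly what the paper does: before expanding, use $A\phi_u(\lambda)=\lambda\phi_u(\lambda)$ (valid since $\phi_u(\lambda)\in\K_\lambda$) together with $A(Au')=-u'$ to rewrite $-(\phi_u(\lambda),Au')_A=\lambda(\phi_u(\lambda),u')-(\phi_u(\lambda),Au')$. These are $H$-inner products, for which the $H$-convergent series \eqref{PhiUbis} may be interchanged with the pairing; feeding in $(\psi_k,u')=\lambda_k\langle\delta_{u'},\psi_k\rangle/(1+\lambda_k^2)$ and $(\psi_k,Au')=-\langle\delta_{u'},\psi_k\rangle/(1+\lambda_k^2)$ from \eqref{deltaU} produces \eqref{QF-form} directly. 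Alternatively, you could use the other series representation $\phi_u(\lambda)=u+\sum_k\frac{(1+\lambda\lambda_k)\overline{\langle\delta_u,\psi_k\rangle}}{(1+\lambda_k^2)(\lambda_k-\lambda)}\psi_k$, which \emph{does} converge in $H^1_\Dom$, note $(u,Au')_A=0$, and then pair term by term using $(\psi_k,Au')_A=-\langle\delta_{u'},\psi_k\rangle$. Separately, your chain $(\psi_k,Au')_A=[\psi_k,u']_A=\overline{[u',\psi_k]_A}=\overline{\langle\delta_{u'},\psi_k\rangle}$ has two sign errors and lands on the wrong conjugate; the correct value is $(\psi_k,Au')_A=-\langle\delta_{u'},\psi_k\rangle$ (no bar), and the identity $[v,w]_A=-(v,Aw)_A$ holds only for $v,w\in\Sing$, so it cannot be applied with $v=\psi_k$.
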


\begin{proof}
Let $u$, $u'\in D^\perp$. Then 
\begin{multline}\label{F_B}
(F_{\Dom}(\lambda)u,u')_A
=(-A\pi_{\max}B_\Dom(\lambda)(A-\lambda)u,u')_A\\=(\pi_{\max}B_\Dom(\lambda)(A-\lambda)u,Au')_A=(B_\Dom(\lambda)(A-\lambda)u,Au')_A
\end{multline}
where the first equality is the definition of $F_\Dom(\lambda)$, the second because $A|_\Sing$ is an isometry, and the third because $\Sing\perp \Dom_{\min}$ in the $A$-inner product. Using the definition of the $A$ inner product in the last term we thus have
\begin{align*}
(F_{\Dom}(\lambda)u,u')_A
&=(AB_\Dom(\lambda)(A-\lambda)u,-u')+(B_\Dom(\lambda)(A-\lambda)u,Au')\\
&=((A-\lambda)u+\lambda B_\Dom(\lambda)(A-\lambda)u,-u')+(B_\Dom(\lambda)(A-\lambda)u,Au')\\
&=-((A-\lambda)u,u')+ (B_\Dom(\lambda)(A-\lambda)u,(A-\overline \lambda) u')
\end{align*}
Likewise,
\begin{equation*}
(u,F_{\Dom}(\overline \lambda)u')_A = -(u,(A-\overline \lambda)u')+ ((A-\lambda)u,B_\Dom(\overline \lambda)(A-\overline \lambda) u').
\end{equation*}
Then \eqref{BasicSAofF} follows from noting that $((A-\lambda)u,u')=(u,(A-\overline \lambda)u')$ because $D^\perp+\Dom_{\min}$ is a selfadjoint domain and $B_\Dom(\lambda)^*=B_\Dom(\overline \lambda)$. This proves the first assertion of the lemma.

For the second, we have
\begin{multline*}
(F_{\Dom}(\lambda)u,u')_A=(B_\Dom(\lambda)(A-\lambda)u,Au')_A=-(u-B_\Dom(\lambda)(A-\lambda)u,Au')_A\\=-(\phi_u(\lambda),Au')_A=\lambda(\phi_u(\lambda),u')-(\phi_u(\lambda),Au')
\end{multline*}
using \eqref{F_B}. Using \eqref{PhiUbis} and \eqref{deltaU} we get
\begin{equation*}
\lambda(\phi_u(\lambda),u') = \sum_{k=0}^\infty \frac{\lambda\lambda_k\overline {\langle\delta_u,\psi_k\rangle}\langle\delta_{u'},\psi_k\rangle}{(1+\lambda_k^2)(\lambda_k-\lambda)}
\end{equation*}
and
\begin{equation*}
-(\phi_u(\lambda),Au') = \sum_{k=0}^\infty \frac{\overline {\langle\delta_u,\psi_k\rangle}\langle\delta_{u'},\psi_k\rangle}{(1+\lambda_k^2)(\lambda_k-\lambda)}.
\end{equation*}
The combination of these formulas gives \eqref{QF-form}.
\end{proof}

The following proposition is the key results:

\begin{proposition}
Let $\Dom=D+\Dom_{\min}$ with $D\in \SA$, let
\begin{equation*}
D^\perp_0=\set{u\in D^\perp:\delta_u\in H^{-1/2}_{\Dom^\dag}},
\end{equation*}
let $D^\perp_1\subset D^\perp$ be complementary to $D^\perp_0$ in $D^\perp$, and let $\pi_{D^\perp_1}:D^\perp\to D^\perp$ be the orthogonal projection on $D^\perp_1$. Then for every selfadjoint operator $S:D^\perp\to D^\perp$ there is $\zeta<0$ such that $\pi_{D^\perp_1}(F_\Dom(\lambda)+S)|_{D^\perp_1}$ is negative if $\lambda<\zeta$.
\end{proposition}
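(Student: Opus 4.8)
\emph{Strategy and reduction.} The whole statement collapses to one scalar asymptotic fact, after which monotonicity and compactness do the rest. For real $\lambda$, \eqref{BasicSAofF} makes $F_\Dom(\lambda)$ selfadjoint on $D^\perp$, hence $F_\Dom(\lambda)+S$ is selfadjoint and its compression $\pi_{D^\perp_1}(F_\Dom(\lambda)+S)|_{D^\perp_1}$ is selfadjoint on the finite-dimensional space $D^\perp_1$, with quadratic form $u\mapsto(F_\Dom(\lambda)u,u)_A+(Su,u)_A$ for $u\in D^\perp_1$. Since $(Su,u)_A\le\|S\|\,\|u\|_A^2$ ($\|S\|$ the operator norm with respect to the $A$-inner product), it suffices to produce $\zeta<0$ with
\[
(F_\Dom(\lambda)u,u)_A\le-(\|S\|+1)\,\|u\|_A^2\qquad\text{for all }u\in D^\perp_1,\ \lambda<\zeta,
\]
because then the compression's quadratic form is $\le-\|w\|_A^2$ on $D^\perp_1$. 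I will get this inequality by showing that on the $A$-unit sphere of $D^\perp_1$ the map $\lambda\mapsto(F_\Dom(\lambda)u,u)_A$ goes to $-\infty$, uniformly in $u$.

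\emph{Series, monotonicity, pointwise blow-up.} Let $\lambda_1=\min\spec(A_\Dom)$ (finite by Theorem~\ref{AllBddBelow}) and put $t_0=|\lambda_1|+2$; for $t\ge t_0$ one has $-t\in\res(A_\Dom)$ and $\lambda_k+t\ge2$ for every $k$. With $c_k=|\langle\delta_u,\psi_k\rangle|^2\ge0$, the diagonal case $u'=u$ of \eqref{QF-form} is the absolutely convergent real series
\[
(F_\Dom(-t)u,u)_A=\sum_k\frac{c_k}{1+\lambda_k^2}\,f_k(t),\qquad f_k(t)=\frac{1-t\lambda_k}{\lambda_k+t}.
\]
Since $f_k'(t)=-(1+\lambda_k^2)/(\lambda_k+t)^2<0$, each $f_k$ is strictly decreasing on $[t_0,\infty)$, so $t\mapsto(F_\Dom(-t)u,u)_A$ is non-increasing there, for every $u$. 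Now fix $u\in D^\perp_1\setminus\{0\}$. Because $D^\perp_1\cap D^\perp_0=\{0\}$ we have $\delta_u\notin H^{-1/2}_{\Dom^\dag}$, which by \eqref{sNormOfDeltaU} and the Fourier description of $H^{-1/2}_{\Dom^\dag}$ preceding it means $\sum_k c_k/(1+|\lambda_k|)=\infty$; discarding the finitely many $k$ with $\lambda_k<2$ leaves $\sum_{\lambda_k\ge2}c_k/\lambda_k=\infty$. For those finitely many $k$ with $\lambda_k<2$, $f_k$ is bounded above on $[t_0,\infty)$ (continuous, finite limit $-\lambda_k$ at $\infty$), so their total is $\le C(u)<\infty$ for all $t\ge t_0$. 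For $t\ge t_0$ and $2\le\lambda_k\le t$ one has $f_k(t)=-(t\lambda_k-1)/(\lambda_k+t)\le-(t\lambda_k-1)/(2t)\le-\lambda_k/4$, hence $\frac{c_k}{1+\lambda_k^2}f_k(t)\le-\frac{c_k}{8\lambda_k}$, while the terms with $\lambda_k>t$ are $\le0$ and may be dropped. Therefore $(F_\Dom(-t)u,u)_A\le C(u)-\frac18\sum_{2\le\lambda_k\le t}c_k/\lambda_k\to-\infty$ as $t\to\infty$.

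\emph{Uniformization and conclusion.} The $A$-unit sphere $\Sigma\subset D^\perp_1$ is compact. Given $u\in\Sigma$, choose $t_u\ge t_0$ with $(F_\Dom(-t_u)u,u)_A<-(\|S\|+1)$; the fixed continuous quadratic form $w\mapsto(F_\Dom(-t_u)w,w)_A$ is then $<-(\|S\|+1)$ on a neighborhood $V_u$ of $u$ in $\Sigma$, and by the monotonicity above the same holds for $w\mapsto(F_\Dom(-t)w,w)_A$ whenever $t\ge t_u$. Extract a finite subcover $V_{u_1},\dots,V_{u_N}$ and set $t^\ast=\max_j t_{u_j}$, $\zeta=-t^\ast$. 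For $\lambda<\zeta$ we get $(F_\Dom(\lambda)w,w)_A<-(\|S\|+1)$ for all $w\in\Sigma$, hence $(F_\Dom(\lambda)w,w)_A\le-(\|S\|+1)\|w\|_A^2$ for all $w\in D^\perp_1$ by homogeneity; combined with the reduction step this gives $(\pi_{D^\perp_1}(F_\Dom(\lambda)+S)w,w)_A\le-\|w\|_A^2$ on $D^\perp_1$, so that operator is negative for $\lambda<\zeta$.

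\emph{Main obstacle.} Everything but the pointwise blow-up is bookkeeping. The real point is converting the divergence of $\sum_k c_k/(1+|\lambda_k|)$, which is precisely the failure $\delta_u\notin H^{-1/2}_{\Dom^\dag}$, into divergence to $-\infty$ of the $\lambda$-weighted series $\sum_k\frac{c_k}{1+\lambda_k^2}f_k(t)$; the elementary bound $f_k(t)\le-\lambda_k/4$ valid exactly on the window $2\le\lambda_k\le t$, together with discarding the nonpositive tail $\lambda_k>t$, is what makes this work, and the monotonicity of each $f_k$ is what upgrades the resulting pointwise statement to the uniform control over $\Sigma$ needed to absorb the bounded perturbation $S$.
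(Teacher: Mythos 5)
Your proof is correct. It establishes the conclusion directly, whereas the paper argues by contradiction, but both rest on the same computational core: the series identity \eqref{QF-form} and the fact that the weights $(1+\lambda\lambda_k)/(\lambda_k-\lambda)$ become negative for $\lambda$ very negative once $\lambda_k>0$. In the paper one assumes a failing sequence $\zeta_\ell\to-\infty$ with $A$-unit eigenvectors $u_\ell\in D^\perp_1$ of nonnegative eigenvalues, passes to a convergent subsequence $u_\ell\to u$, drops the nonnegative tail $k\ge k_0$ of $-\bigl(F_\Dom(\zeta_\ell)u_\ell,u_\ell\bigr)_A$ to obtain a finite sum that passes to the limit, and concludes $\sum_k|\lambda_k|\,|\langle\delta_u,\psi_k\rangle|^2/(1+\lambda_k^2)<\infty$, i.e.\ $\delta_u\in H^{-1/2}_{\Dom^\dag}$, contradicting $u\in D^\perp_1\setminus\{0\}$. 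You instead prove the estimate outright: the small but genuinely useful observation that each $f_k(t)=(1-t\lambda_k)/(\lambda_k+t)$ has $f_k'(t)=-(1+\lambda_k^2)/(\lambda_k+t)^2<0$ makes $t\mapsto(F_\Dom(-t)u,u)_A$ monotone, so the pointwise divergence to $-\infty$ (driven by $\sum_k c_k/(1+|\lambda_k|)=\infty$, the exact meaning of $\delta_u\notin H^{-1/2}_{\Dom^\dag}$, together with the window bound $f_k(t)\le-\lambda_k/4$ for $2\le\lambda_k\le t$) can be upgraded to a uniform bound on the compact $A$-unit sphere of $D^\perp_1$ by a finite subcover. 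Both proofs invoke compactness of the same sphere; yours in a covering argument, the paper's via sequential compactness. Your version is slightly more constructive in that it exhibits the threshold $\zeta=-t^\ast$ from a finite subcover, at the modest cost of the extra monotonicity lemma; the paper's is a bit shorter. Both are sound and neither requires ingredients the other lacks.
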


\begin{proof}
Suppose that the conclusion is false. Then there is a selfadjoint operator $S:D^\perp\to D^\perp$ and a sequence $\set{\zeta_\ell}_{\ell=1}^\infty$ decreasing to $-\infty$ such that $\pi_{D^\perp_1}(F_\Dom(\zeta_\ell)+S)|_{D^\perp_1}$ has a nonnegative eigenvalue for each $\ell$. Let $u_\ell\in D^\perp_1$ be an eigenvector of $F_{\Dom}(\zeta_\ell)+S$ for such an eigenvalue, with $\|u_\ell\|_A=1$. Thus
\begin{equation*}
(F_{\Dom}(\zeta_\ell)u_\ell,u_\ell)_A+(Su_\ell,u_\ell)_A\geq 0
\end{equation*}
for all $\ell$. Passing to a subsequence, we may assume that $\set{u_\ell}_{\ell=1}^\infty$ converges to some $u\in D^\perp_1$. Using \eqref{QF-form} we have
\begin{equation*}
(Su_\ell,u_\ell)_A\geq -(F_{\Dom}(\zeta_\ell)u_\ell,u_\ell)_A = -\sum_{k=0}^\infty \frac{|\langle\delta_{u_\ell},\psi_k\rangle|^2} {1+\lambda_k^2}\frac{1+\zeta_\ell\lambda_k}{\lambda_k-\zeta_\ell}
\end{equation*}
for every $\ell$. If $k_0=\min\set{k:\lambda_k>0}$ and $k\geq k_0$, then 
\begin{equation*}
\frac{1+\zeta_\ell\lambda_k}{\lambda_k-\zeta_\ell}<0
\end{equation*}
if $\zeta_\ell<-1/\lambda_{k_0}$, so bearing in mind that the $\lambda_k$ increase monotonically with $k$,  
\begin{equation*}
\sum_{k=k_0}^\infty -\frac{|\langle\delta_{u_\ell},\psi_k\rangle|^2} {1+\lambda_k^2}\frac{1+\zeta_\ell\lambda_k}{\lambda_k-\zeta_\ell}
\end{equation*}
is a series of non-negative terms if $\ell>\ell_0$ so that $\zeta_\ell<-1/\lambda_{k_0}$ for such $\ell$. Hence
\begin{equation*}
(Su_\ell,u_\ell)_A\geq -\sum_{k=0}^N \frac{|\langle\delta_{u_\ell},\psi_k\rangle|^2} {1+\lambda_k^2}\frac{1+\zeta_\ell\lambda_k}{\lambda_k-\zeta_\ell}
\end{equation*}
for every $N\geq k_0$ and all $\ell>\ell_0$. Taking the limit as $\ell\to\infty$ gives
\begin{equation*}
(Su,u)_A \geq \sum_{k=0}^N \lambda_k\frac{|\langle\delta_{u},\psi_k\rangle|^2} {1+\lambda_k^2}
\end{equation*}
for every $N$, so
\begin{equation*}
 \lim_{N\to\infty}\sum_{k=0}^N \lambda_k\frac{|\langle\delta_{u},\psi_k\rangle|^2} {1+\lambda_k^2}\leq (Su,u)_A.
\end{equation*}
Since only finitely many $\lambda_k$ can be negative, the estimate implies that 
\begin{equation*}
\sum_{k=0}^\infty |\lambda_k|\frac{|\langle\delta_{u},\psi_k\rangle|^2} {1+\lambda_k^2}
\end{equation*}
converges. This in turn implies that the norm of $\delta_u$ as an element of $H^{-1/2}_{\Dom^\dag}$ is finite, see \eqref{sNormOfDeltaU}. So $u\in D^\perp_0$, a contradiction since $\|u\|_A=1$ and $u\in D^\perp_1\cap D^\perp_0$.
\end{proof}

In particular, if $\mathcal P_{D^\perp}\cap H^{-1/2}_{\Dom^\dag} = 0$, then for every $c>0$ there is $\zeta<0$ such that $F_{\Dom}(\lambda)+c\Id$ is negative if $\lambda<\zeta$. Thus:

\begin{corollary}\label{SingDelta}
If $\mathcal P_{D^\perp}\cap H^{-1/2}_{\Dom^\dag} = 0$, then $F_{\Dom}(\lambda)$ is invertible for every sufficiently negative $\lambda$, and $\|F_{\Dom}(\lambda)^{-1}\|_{\L(D^\perp)}\to 0$ as $\lambda\to -\infty$.
\end{corollary}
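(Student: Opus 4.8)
The goal is to prove Corollary~\ref{SingDelta}: under the hypothesis $\mathcal P_{D^\perp}\cap H^{-1/2}_{\Dom^\dag}=0$, the operator $F_\Dom(\lambda)$ is invertible for $\lambda$ sufficiently negative, with $\|F_\Dom(\lambda)^{-1}\|\to 0$ as $\lambda\to-\infty$.

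The plan is to extract this from the Proposition immediately preceding it by a suitable choice of $S$. The hypothesis $\mathcal P_{D^\perp}\cap H^{-1/2}_{\Dom^\dag}=0$ says precisely that, in the notation of the Proposition, $D^\perp_0=0$, so we may take $D^\perp_1=D^\perp$ and $\pi_{D^\perp_1}=\Id$. First I would fix an arbitrary $c>0$ and apply the Proposition with $S=c\,\Id$, which is selfadjoint with respect to the $A$-inner product. The Proposition then yields $\zeta<0$ such that $F_\Dom(\lambda)+c\,\Id$ is negative for $\lambda<\zeta$; that is, $(F_\Dom(\lambda)u,u)_A\le -c\|u\|_A^2$ for all $u\in D^\perp$ when $\lambda<\zeta$. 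Since by \eqref{BasicSAofF} the operator $F_\Dom(\lambda)$ is selfadjoint for real $\lambda\in\res(A_\Dom)$ (and all sufficiently negative $\lambda$ lie in $\res(A_\Dom)$ because $A_\Dom$ is semibounded below), this is a quantitative statement about its spectrum: every eigenvalue of $F_\Dom(\lambda)$ is $\le -c$. Hence $F_\Dom(\lambda)$ is invertible and $\|F_\Dom(\lambda)^{-1}\|_{\L(D^\perp)}\le 1/c$ for $\lambda<\zeta$.

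Now I would let $c\to\infty$ to get the decay. For each positive integer $n$, applying the previous paragraph with $c=n$ produces $\zeta_n<0$ such that $\|F_\Dom(\lambda)^{-1}\|_{\L(D^\perp)}\le 1/n$ whenever $\lambda<\zeta_n$; we may take the $\zeta_n$ decreasing. Given $\varepsilon>0$, choose $n>1/\varepsilon$; then $\lambda<\zeta_n$ forces $\|F_\Dom(\lambda)^{-1}\|_{\L(D^\perp)}<\varepsilon$. This is exactly the assertion that $\|F_\Dom(\lambda)^{-1}\|_{\L(D^\perp)}\to 0$ as $\lambda\to-\infty$, and in particular $F_\Dom(\lambda)$ is invertible for all sufficiently negative $\lambda$.

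There is essentially no obstacle here — the Corollary is a direct packaging of the Proposition with $S=c\,\Id$ and $D^\perp_0=0$ — so the only point requiring a word of care is the passage from the negativity of the selfadjoint operator $F_\Dom(\lambda)+c\,\Id$ to the norm bound $\|F_\Dom(\lambda)^{-1}\|\le 1/c$ on the bounded finite-dimensional space $D^\perp$; this is just the elementary fact that a selfadjoint operator all of whose eigenvalues are $\le -c<0$ is invertible with inverse of norm at most $1/c$. One should also note explicitly that for $\lambda$ sufficiently negative one indeed has $\lambda\in\res(A_\Dom)$, so that $F_\Dom(\lambda)$ and the identity \eqref{QF-form} are defined; this is immediate from the semiboundedness of $A_\Dom$ (Theorem~\ref{AllBddBelow}).
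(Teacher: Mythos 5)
Your proof is correct and follows the same route the paper takes: the hypothesis forces $D^\perp_0=0$ so the Proposition applies with $\pi_{D^\perp_1}=\Id$, and taking $S=c\,\Id$ for large $c$ gives negativity of $F_\Dom(\lambda)+c\,\Id$, hence the uniform bound $\|F_\Dom(\lambda)^{-1}\|\le 1/c$ for $\lambda$ sufficiently negative. You have merely filled in the elementary step (negativity of a selfadjoint operator on the finite-dimensional space $D^\perp$ implies invertibility with the stated norm bound) that the paper leaves to the reader.
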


The definition of $F_\Dom(\lambda)$ gives
\begin{equation*}
K_\lambda=\set{u-AF_{\Dom}(\lambda)u:u\in D^\perp}.
\end{equation*}
Since $F_{\Dom}(\lambda)$ is invertible for every sufficiently negative $\lambda$, also
\begin{equation}\label{KlambdaVSFlambdaInverse}
K_\lambda=\set{v+F_{\Dom}(\lambda)^{-1}Av:v\in D}.
\end{equation}
Thus, if $\mathcal P_{D^\perp}\cap H^{-1/2}_{\Dom^\dag}=0$, Corollary \ref{SingDelta} and \eqref{KlambdaVSFlambdaInverse} give that $K_\lambda\to D$ as $\lambda\to-\infty$. Applied to $D=D_F$ and bearing in mind Proposition \ref{DeltaLikeReg} and that the Friedrichs extension of $A$ is bounded below, we get:

\begin{theorem}\label{FriedrichsAsLimit}
Consider the curve
\begin{equation*}
\R_-\ni\lambda\mapsto K_{\lambda}\in \Gr_{d}(\Sing).
\end{equation*}
Then $K_{\lambda}\to D_F$ as $\lambda\to -\infty$.
\end{theorem}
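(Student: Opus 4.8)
The statement is essentially a corollary of the results already established, so the plan is to assemble Proposition~\ref{DeltaLikeReg}, Corollary~\ref{SingDelta}, and the identity~\eqref{KlambdaVSFlambdaInverse}, taking the selfadjoint reference domain to be $\Dom_F$ itself. First I would check that the curve is meaningful near $-\infty$: every selfadjoint extension is semibounded from below (Theorem~\ref{AllBddBelow}), so $\res(A_{\Dom_F})$ contains a half-line $(-\infty,\zeta_0)$, and since $\bgspec(A)\subset\spec(A_{\Dom_F})$ this half-line lies in $\bgres(A)$; there $K_\lambda=\pi_{\max}\ker(A_{\Dom_{\max}}-\lambda)$ is a $d$-dimensional subspace of $\Sing$, the operator $F_{\Dom_F}(\lambda)\colon D_F^\perp\to D_F^\perp$ is defined, and $K_\lambda=\set{u-AF_{\Dom_F}(\lambda)u:u\in D_F^\perp}$ by the formula displayed just before~\eqref{KlambdaVSFlambdaInverse}.

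Next I would invoke Proposition~\ref{DeltaLikeReg}, which is precisely the statement $\mathcal P_{\Dom_F^\perp}\cap H^{-1/2}_{\Dom_F^\dag}=0$. This is the hypothesis of Corollary~\ref{SingDelta}, applied with $D=D_F$, and it yields a $\zeta<\zeta_0$ such that $F_{\Dom_F}(\lambda)$ is invertible for $\lambda<\zeta$, together with $\|F_{\Dom_F}(\lambda)^{-1}\|_{\L(D_F^\perp)}\to0$ as $\lambda\to-\infty$. For such $\lambda$ the identity~\eqref{KlambdaVSFlambdaInverse} exhibits $K_\lambda$ as the graph of the linear map
\begin{equation*}
T_\lambda=F_{\Dom_F}(\lambda)^{-1}A|_{D_F}\colon D_F\to D_F^\perp,
\end{equation*}
where $A|_{D_F}\colon D_F\to D_F^\perp$ is an isometry by~\eqref{SA} and~\eqref{A:EmaxtoEmax}; hence $\|T_\lambda\|\le\|F_{\Dom_F}(\lambda)^{-1}\|_{\L(D_F^\perp)}\to0$.

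Finally I would conclude using the chart around $D_F$ introduced just before Lemma~\ref{TheSAspaces}: $U_{D_F}=\set{\graph T:T\in\L(D_F,D_F^\perp)}$ is a neighborhood of $D_F$ in $\Gr_d(\Sing)$, the assignment $\graph T\mapsto T$ is a homeomorphism onto $\L(D_F,D_F^\perp)$, and $D_F$ corresponds to $T=0$; since $K_\lambda=\graph T_\lambda$ with $\|T_\lambda\|\to0$, it follows that $K_\lambda\to D_F$ in $\Gr_d(\Sing)$. There is no genuine obstacle in this last step — all of the analytic content sits in Proposition~\ref{DeltaLikeReg} and Corollary~\ref{SingDelta}. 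The one point that must be handled with a little care is the legitimacy of choosing $\Dom_F$ as reference domain, i.e.\ that $F_{\Dom_F}(\lambda)$ is available for all sufficiently negative real $\lambda$; this is exactly the semiboundedness of the Friedrichs extension, which is why it is flagged in the paragraph preceding the theorem.
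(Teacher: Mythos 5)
Your proof is correct and follows essentially the same route as the paper's: apply Proposition~\ref{DeltaLikeReg} to verify the hypothesis of Corollary~\ref{SingDelta} for $D=D_F$, then read off from~\eqref{KlambdaVSFlambdaInverse} that $K_\lambda$ is a graph over $D_F$ with operator norm tending to $0$. You simply make explicit a few points the paper leaves implicit — that semiboundedness of $A_{\Dom_F}$ (together with $\bgspec(A)\subset\spec(A_{\Dom_F})$) guarantees the curve is defined on a half-line, that $A|_{D_F}$ is an isometry so $\|T_\lambda\|\le\|F_{\Dom_F}(\lambda)^{-1}\|$, and that convergence of the graph operator to $0$ gives convergence in $\Gr_d(\Sing)$ via the standard chart — none of which changes the argument.
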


The limit $\lim_{\lambda\to-\infty}K_\lambda$ is of course unique. Since $K_\lambda$ is independent of its representation, we have that if in \eqref{KlambdaVSFlambdaInverse} $K_\lambda \to D$ then $D=D_F$. Consequently,

\begin{theorem}
The Friedrichs domain of $A$ is the only selfadjoint domain such that $\mathcal P_{D^\perp}\cap H^{-1/2}_{\Dom^\dag} = 0$.
\end{theorem}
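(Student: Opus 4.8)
The plan is to read this statement as the synthesis of two facts already in hand: Proposition~\ref{DeltaLikeReg}, which shows that the Friedrichs domain \emph{does} satisfy $\mathcal P_{D^\perp}\cap H^{-1/2}_{\Dom^\dag}=0$, and the circle of results around Theorem~\ref{FriedrichsAsLimit}, which forces any domain with this property to coincide with $\Dom_F$. So existence is immediate from Proposition~\ref{DeltaLikeReg} (with $D=D_F$), and the entire content to be proved is uniqueness.

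For uniqueness I would argue as follows. Let $\Dom=D+\Dom_{\min}$ with $D\in\SA$ and suppose $\mathcal P_{D^\perp}\cap H^{-1/2}_{\Dom^\dag}=0$. Then Corollary~\ref{SingDelta} applies: $F_\Dom(\lambda)$ is invertible for all sufficiently negative $\lambda$ and $\|F_\Dom(\lambda)^{-1}\|_{\L(D^\perp)}\to 0$ as $\lambda\to-\infty$. Now invoke the representation \eqref{KlambdaVSFlambdaInverse}, namely $K_\lambda=\set{v+F_\Dom(\lambda)^{-1}Av:v\in D}$, which exhibits $K_\lambda$, for such $\lambda$, as the graph of the linear map $v\mapsto F_\Dom(\lambda)^{-1}Av$ from $D$ to $D^\perp$. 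Since $A|_\Sing$ is an isometry, the operator norm of this map is dominated by $\|F_\Dom(\lambda)^{-1}\|_{\L(D^\perp)}$, hence tends to $0$. In the chart $U_D$ of $\Gr_d(\Sing)$ described after Lemma~\ref{TheSAspaces} — which identifies a neighborhood of $D$ with $\L(D,D^\perp)$ via $T\mapsto\graph T$ — this says precisely that $K_\lambda\to\graph 0=D$ as $\lambda\to-\infty$.

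On the other hand, Theorem~\ref{FriedrichsAsLimit} asserts $K_\lambda\to D_F$ as $\lambda\to-\infty$ along $\R_-$, and $K_\lambda$ is an intrinsically defined subspace, independent of any choice of representation. Since $\Gr_d(\Sing)$ is Hausdorff, the limit of the curve $\lambda\mapsto K_\lambda$ is unique, so $D=D_F$ and therefore $\Dom=\Dom_F$. Combined with Proposition~\ref{DeltaLikeReg} this gives the claim.

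There is essentially no serious obstacle here; the argument is a short assembly of earlier results. The only point deserving a line of care is the passage from "$F_\Dom(\lambda)^{-1}A\to 0$ in $\L(D,D^\perp)$" to "$K_\lambda\to D$ in $\Gr_d(\Sing)$," which is immediate once one recalls that the graph parametrization $T\mapsto\graph T$ is a homeomorphism of $\L(D,D^\perp)$ onto the chart $U_D$; I would simply remark on this rather than belabor it.
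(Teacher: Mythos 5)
Your proposal is correct and follows essentially the same route as the paper: use Proposition~\ref{DeltaLikeReg} for existence, and for uniqueness observe that Corollary~\ref{SingDelta} together with the representation \eqref{KlambdaVSFlambdaInverse} forces $K_\lambda\to D$ as $\lambda\to-\infty$ whenever $\mathcal P_{D^\perp}\cap H^{-1/2}_{\Dom^\dag}=0$, so uniqueness of the limit in $\Gr_d(\Sing)$ combined with Theorem~\ref{FriedrichsAsLimit} gives $D=D_F$. Your extra remark that the graph parametrization $T\mapsto\graph T$ is a homeomorphism onto the chart $U_D$ just makes explicit the convergence step the paper leaves implicit.
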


\begin{proposition}\label{UnstableImpliesInVariety}
Suppose $\set{D_\ell}_{\ell=1}^\infty \subset \SA$ is a sequence converging to $D$ and there is $\set{\zeta_\ell}\subset \R$ with $\zeta_\ell\to -\infty$ as $\ell\to\infty$ such that $D_\ell\cap K_{\zeta_\ell}\ne 0$. Then $D\cap D_F\ne 0$.
\end{proposition}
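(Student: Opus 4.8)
The plan is to extract a nonzero element of $D \cap D_F$ by tracking, along the sequence, a unit vector $\phi_\ell$ in the one-dimensional (or higher) intersection $D_\ell \cap K_{\zeta_\ell}$, using the representation of $K_\lambda$ from Section~5 and Theorem~\ref{FriedrichsAsLimit}. Since each $\phi_\ell \in K_{\zeta_\ell}$, we may write $\phi_\ell = u_\ell - A F_{\Dom_F}(\zeta_\ell) u_\ell$ with $u_\ell \in D_F^\perp$ (here I fix once and for all the ambient selfadjoint domain $\Dom_F$, whose resolvent is defined for all sufficiently negative $\lambda$, so $F_{\Dom_F}(\zeta_\ell)$ makes sense for large $\ell$). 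Normalize so that $\|\phi_\ell\|_A = 1$. The key point is that $A F_{\Dom_F}(\zeta_\ell)$ is controlled: from the formula \eqref{QF-form} with $\lambda = \zeta_\ell \to -\infty$ one reads off that $(F_{\Dom_F}(\zeta_\ell)u,u)_A \to -\sum_k \lambda_k |\langle \delta_u,\psi_k\rangle|^2/(1+\lambda_k^2)$, which is bounded below uniformly in $\zeta_\ell$ on the unit sphere of $D_F^\perp$; combined with \eqref{BasicSAofF} (self-adjointness of $F_{\Dom_F}(\zeta_\ell)$ for real $\zeta_\ell$) and Proposition~\ref{DeltaLikeReg} (which by the last displayed estimate in the proof of the key Proposition forces $\sum_k |\lambda_k| |\langle\delta_u,\psi_k\rangle|^2/(1+\lambda_k^2) = \infty$ whenever $u \neq 0$), the operators $F_{\Dom_F}(\zeta_\ell)$ in fact tend to $0$ in $\L(D_F^\perp)$ as $\ell \to \infty$ — this is exactly Corollary~\ref{SingDelta} applied to $\Dom_F$.

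With this in hand the argument runs as follows. First, since $D_F^\perp$ is finite-dimensional, after passing to a subsequence we may assume $u_\ell \to u \in D_F^\perp$; because $\|\phi_\ell\|_A = 1$ and $\|A F_{\Dom_F}(\zeta_\ell)u_\ell\|_A = \|F_{\Dom_F}(\zeta_\ell)u_\ell\|_A \to 0$ (using $\|u_\ell\|_A$ bounded, which follows from $\|\phi_\ell\|_A=1$ and the same operator-norm estimate), we get $\|u_\ell\|_A \to 1$, hence $\|u\|_A = 1$ and in particular $u \neq 0$. Moreover $\phi_\ell = u_\ell - A F_{\Dom_F}(\zeta_\ell)u_\ell \to u$ in the $A$-norm. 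Second, $\phi_\ell \in K_{\zeta_\ell}$, and by Theorem~\ref{FriedrichsAsLimit} we have $K_{\zeta_\ell} \to D_F$ in $\Gr_d(\Sing)$; since convergence in the Grassmannian means convergence of orthogonal projections, and $\phi_\ell \in K_{\zeta_\ell}$ with $\phi_\ell \to u$, it follows that $u \in D_F$. (Concretely: $\pi_{K_{\zeta_\ell}}\phi_\ell = \phi_\ell$, and $\pi_{K_{\zeta_\ell}} \to \pi_{D_F}$ in norm, $\phi_\ell \to u$, so $\pi_{D_F} u = \lim \pi_{K_{\zeta_\ell}}\phi_\ell = u$.) Third, $\phi_\ell \in D_\ell$ by hypothesis, and $D_\ell \to D$ in $\Gr_d(\Sing)$, so by the identical argument $u \in D$. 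Therefore $u \in D \cap D_F$ and $u \neq 0$, which is the claim.

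The main obstacle is the uniform control of the representatives $u_\ell$ and of $F_{\Dom_F}(\zeta_\ell)$: a priori the element of $D_\ell \cap K_{\zeta_\ell}$ is only given abstractly, and one must be careful that the chosen representation $\phi_\ell = u_\ell - AF_{\Dom_F}(\zeta_\ell)u_\ell$ does not degenerate (i.e.\ that $u_\ell$ stays bounded away from $0$ and $\infty$) as $\zeta_\ell \to -\infty$. This is precisely where Corollary~\ref{SingDelta} is indispensable: it guarantees both that $F_{\Dom_F}(\zeta_\ell)$ is defined and invertible for $\ell$ large and that $\|F_{\Dom_F}(\zeta_\ell)\|_{\L(D_F^\perp)} \to 0$, so that the map $u \mapsto u - AF_{\Dom_F}(\zeta_\ell)u$ from $D_F^\perp$ onto $K_{\zeta_\ell}$ is, uniformly in $\ell$, a bi-Lipschitz isomorphism with constants tending to $1$; this pins down $u_\ell$ up to the subsequence extraction. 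Everything else — the compactness extraction in the finite-dimensional space $\Sing$, and the passage from Grassmannian convergence to convergence of projections — is routine.
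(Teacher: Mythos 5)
There is a genuine error in the central analytic step. You claim that $\|F_{\Dom_F}(\zeta_\ell)\|_{\L(D_F^\perp)}\to 0$ and cite Corollary~\ref{SingDelta} for this, but that corollary asserts the opposite: it is the \emph{inverse} $\|F_{\Dom_F}(\lambda)^{-1}\|_{\L(D_F^\perp)}$ that tends to $0$. Indeed, the proposition immediately preceding the corollary, applied with $D=D_F$ (where $D_F^\perp{}_0=0$ by Proposition~\ref{DeltaLikeReg}), shows that $F_{\Dom_F}(\lambda)+c\Id$ is \emph{negative definite} for every $c>0$ once $\lambda$ is negative enough, so $\|F_{\Dom_F}(\lambda)\|\to\infty$. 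Your own displayed observation — that the $k$-th factor $(1+\lambda\lambda_k)/(\lambda_k-\lambda)\to-\lambda_k$, while Proposition~\ref{DeltaLikeReg} forces $\sum_k|\lambda_k||\langle\delta_u,\psi_k\rangle|^2/(1+\lambda_k^2)=\infty$ for $u\ne 0$ — shows $(F_{\Dom_F}(\zeta_\ell)u,u)_A\to-\infty$, not that it is bounded below on the unit sphere. As a result, with the parametrization $\phi_\ell=u_\ell-AF_{\Dom_F}(\zeta_\ell)u_\ell$, $u_\ell\in D_F^\perp$, and $\|\phi_\ell\|_A=1$, the $D_F^\perp$-component $u_\ell$ tends to $0$, not to a unit vector, so the conclusion $\phi_\ell\to u$ with $\|u\|_A=1$ collapses: you would only produce $u=0\in D\cap D_F$, which is vacuous.

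The cure is to use the other parametrization, $K_{\zeta_\ell}=\set{w+F_{\Dom_F}(\zeta_\ell)^{-1}Aw:w\in D_F}$ from \eqref{KlambdaVSFlambdaInverse}, writing $\phi_\ell=w_\ell+F_{\Dom_F}(\zeta_\ell)^{-1}Aw_\ell$ with $w_\ell\in D_F$; then $\|F_{\Dom_F}(\zeta_\ell)^{-1}\|\to 0$ gives $\phi_\ell-w_\ell\to 0$, and the rest of your argument goes through (this is essentially what the paper does). Even simpler: drop the parametrization entirely. Since $\|\phi_\ell\|_A=1$ and $\Sing$ is finite-dimensional, pass directly to a subsequence with $\phi_\ell\to\phi$, $\|\phi\|_A=1$; then your steps involving $\pi_{K_{\zeta_\ell}}\to\pi_{D_F}$ (Theorem~\ref{FriedrichsAsLimit}) and $\pi_{D_\ell}\to\pi_D$ give $\phi\in D\cap D_F$ with $\phi\ne 0$. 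That Grassmannian-convergence argument is correct and is actually cleaner than going through $F_{\Dom_F}(\lambda)$ at all — it just needs to be fed a genuine nonzero limit, which the compactness of the unit sphere of $\Sing$ already supplies.
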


\begin{proof}
For each $\ell$ pick $v_\ell\in D_\ell\cap K_{\zeta_\ell}$ with $\|v_\ell\|_A=1$. Passing to a subsequence, assume that $v_\ell\to v$ as $\ell \to \infty$. Using $\Sing=D_F\oplus D_F^\perp$ gives for each $\ell$, a unique  $w_\ell\in D_F$ such that $v_\ell=w_\ell+F_{\Dom_F}(\zeta_\ell)^{-1}Aw_\ell=v_\ell$. The continuity of projections gives that $w_\ell$ converges. Now Corollary \ref{SingDelta} applied to the Friedrichs domain gives $F_{\Dom_F}(\zeta)^{-1}Aw_\ell \to 0\text{ as }\zeta\to -\infty$. Thus $w_\ell\to v$. Since $w_\ell \in D_F$, $v\in D_F$. Now, $D_\ell=\graph T_\ell$ for a unique $T_\ell:D\to D^\perp$; the statement that $D_\ell\to D$ means that $T_\ell\to 0$. Thus $w_\ell=v_\ell+T_\ell v_\ell$ for a unique $v_\ell\in D$ and as before $v_\ell$ converges, so $w_\ell$ converges to an element of $D$ which must be $v$. Since $\|v\|_A=1$, $D\cap D_F\ne 0$.
\end{proof}

%%%%%%%%%%%%%%%%%%%%%%%%%%%%%%%
\section{Spectrally unstable domains}\label{Main}

The following, a restatement of Theorem \ref{Characterization}, is our main result. 

\begin{theorem}
Let $\Dom_F=D_F+\Dom_{\min}$ be the domain of the Friedrichs extension of $A$. The element $D\in \SA$ has the property \eqref{UnstableDefinition} if and only if $D\in \mathfrak V_{D_F}$.
\end{theorem}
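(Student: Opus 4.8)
The plan is to prove the two implications separately, with the heavy lifting already done by the previous section. First suppose $D\notin\mathfrak V_{D_F}$, i.e. $D\cap D_F=0$. I want to show $D$ is spectrally stable in the sense negating \eqref{UnstableDefinition}, namely that there is a neighborhood $U$ of $D$ in $\SA$ and a constant $C$ with $\spec(A_{\Dom'})\subset(C,\infty)$ for all $\Dom'=D'+\Dom_{\min}$, $D'\in U$. The contrapositive is exactly Proposition~\ref{UnstableImpliesInVariety}: if no such $U,C$ existed we could extract a sequence $D_\ell\to D$ in $\SA$ and $\zeta_\ell\to-\infty$ with $\spec(A_{\Dom_\ell})\cap(-\infty,\zeta_\ell)\ne\emptyset$, hence by Lemma~\ref{KcapD} (applied to $\lambda=\zeta_\ell\in\bgres(A)$ for $\ell$ large, since $\bgspec(A)$ is discrete and bounded below) there is $\lambda_\ell<\zeta_\ell$ with $D_\ell\cap K_{\lambda_\ell}\ne0$; Proposition~\ref{UnstableImpliesInVariety} then forces $D\cap D_F\ne0$, a contradiction. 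So $D\cap D_F=0$ implies $D$ is not spectrally unstable.

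For the converse, suppose $D\cap D_F\ne0$; I must produce, for every neighborhood $U$ of $D$ in $\SA$ and every $\zeta\in\R$, some $D'\in U$ with $\spec(A_{\Dom'})\cap(-\infty,\zeta)\ne\emptyset$. Here is where Theorem~\ref{FriedrichsAsLimit} is used: $K_\lambda\to D_F$ in $\Gr_d(\Sing)$ as $\lambda\to-\infty$. Pick $0\ne v\in D\cap D_F$. The idea is that for $\lambda$ very negative, $K_\lambda$ is close to $D_F$, hence contains a unit vector $v_\lambda$ close to $v$; then I want to build $D'\in\SA$ close to $D$ which also contains a vector close to $v$, arranged so that $D'\cap K_\lambda\ne0$, whence $\lambda\in\spec(A_{\Dom'})$ by Lemma~\ref{KcapD}. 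Concretely: work in the chart $U_D$ around $D$, writing candidate domains as $\graph T$ for $T:D\to D^\perp$ with $AT:D\to D$ selfadjoint (Lemma~\ref{TheSAspaces}). Given $\lambda$ negative enough, I need $T$ small (so $\graph T\in U$) and a $u\in D$, $u\ne0$, with $u+Tu\in K_\lambda$. Using \eqref{KlambdaVSFlambdaInverse}, $K_\lambda=\{w+F_{\Dom_F}(\lambda)^{-1}Aw:w\in D_F\}$; since $v\in D_F$, the vector $\phi_\lambda:=v+F_{\Dom_F}(\lambda)^{-1}Av\in K_\lambda$ and $\phi_\lambda\to v$ as $\lambda\to-\infty$ by Corollary~\ref{SingDelta}. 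Decompose $\phi_\lambda=u_\lambda+u_\lambda^\perp$ along $\Sing=D\oplus D^\perp$; since $\phi_\lambda\to v\in D$, $u_\lambda\to v\ne0$ and $u_\lambda^\perp\to0$. For $\lambda$ negative enough, $u_\lambda\ne0$, so the linear map $D\to D^\perp$ sending $u_\lambda\mapsto u_\lambda^\perp$ (and defined arbitrarily but small on a complement) has $\phi_\lambda\in\graph T_\lambda$; the issue is to make this choice so that $AT_\lambda:D\to D$ is selfadjoint, i.e. $\graph T_\lambda\in\SA$. Because $\lambda\in\R$, the one-dimensional space $\C\phi_\lambda+\Dom_{\min}$ already satisfies the symmetry relation $[\phi_\lambda,\phi_\lambda]_A=0$ (the computation in the proof of Lemma~\ref{AllLambdaInSomeSpec}), which is the obstruction to selfadjoint-extendability on that line; one then extends $\C\phi_\lambda$ to a $d$-dimensional $A$-isotropic-type subspace $D'\subset\Sing$ with $D'\in\SA$ and $D'$ close to $D$ — possible since the "partial isometry" $u_\lambda\mapsto u_\lambda^\perp$ is a small perturbation of the zero map and $\SA$ is a manifold through $D$ modeled on selfadjoint maps $D\to D$. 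Then $v_\lambda:=\phi_\lambda\in D'\cap K_\lambda\ne0$, so $\lambda\in\spec(A_{D'+\Dom_{\min}})$, and $D'\in U$ once $\lambda$ is negative enough that $T_\lambda$ is small; taking $\lambda<\zeta$ finishes it.

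The main obstacle is the last construction: promoting the single vector $\phi_\lambda\in K_\lambda\cap(\text{near }D)$ to a full $d$-dimensional selfadjoint domain $D'\in\SA$ that is \emph{simultaneously} close to $D$ and contains $\phi_\lambda$. The delicate point is compatibility of the selfadjointness constraint (the quadratic condition "$AT:D\to D$ selfadjoint" cutting out $\SA$ inside the Grassmannian) with the linear interpolation condition "$\phi_\lambda\in\graph T$". I expect to handle it by choosing coordinates adapted to $\C\phi_\lambda\subset K_\lambda$: since $K_\lambda\in\SA$, pick any $D''\in\SA$ with $\C\phi_\lambda\subset D''$ and $D''$ close to $D$ — e.g. take $D''$ to be $K_\lambda$ itself if $K_\lambda$ happens to be near $D$, but in general $K_\lambda$ is near $D_F$, not $D$, so instead one rotates $K_\lambda$ inside $\SA$ fixing the line $\C\phi_\lambda$; the space of $D'\in\SA$ containing a fixed $A$-admissible line is a submanifold, and I must check it meets $U_D$, which it does because as $\lambda\to-\infty$ the line $\C\phi_\lambda\to\C v\subset D$ and $D$ itself lies in (the closure of) that family. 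Making this "rotation" argument precise — i.e. that through every line $\C v\subset D$ with $v\ne0$ there pass elements of $\SA$ arbitrarily close to $D$, and that this persists under small perturbation of the line — is the crux, and is a local statement about the geometry of $\SA\subset\Gr_d(\Sing)$ that follows from the manifold structure in Lemma~\ref{TheSAspaces}.
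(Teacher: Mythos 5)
Your overall architecture matches the paper's: the stable direction follows from Proposition~\ref{UnstableImpliesInVariety} via the contrapositive, and the unstable direction is shown by producing, for each sufficiently negative $\lambda$, a domain $D_\lambda\in\SA$ near $D$ with $D_\lambda\cap K_\lambda\ne0$, using a vector $v\in D\cap D_F$ and the representation $\phi_\lambda=v+F_{\Dom_F}(\lambda)^{-1}Av\in K_\lambda$ with $\phi_\lambda\to v$. Up to that point you and the paper are doing the same thing.

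However, there is a genuine gap in the step you yourself flag as the crux: extending the partially prescribed map $T$ (with $Tu_\lambda=u_\lambda^\perp$ on the line $\C u_\lambda\subset D$) to a map $T:D\to D^\perp$ with $AT:D\to D$ selfadjoint and $\|T\|$ small. Your justification --- that ``$\SA$ is a manifold through $D$ modeled on selfadjoint maps $D\to D$'' --- does not by itself produce the extension: the manifold structure tells you what charts look like, but not that the affine-linear constraint ``$T$ prescribed on a subspace'' is compatible with the selfadjointness condition while keeping $T$ small, nor does the proposed ``rotation of $K_\lambda$ fixing $\C\phi_\lambda$'' argument come with an estimate placing the rotated domain in a prescribed neighborhood of $D$. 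The paper closes exactly this gap explicitly. Writing $D=V_\lambda\oplus W_\lambda$ with $V_\lambda=\pi_D\{\phi_v(\lambda):v\in V\}$ for a subspace $V\subset D\cap D_F$, it defines $T_{\lambda,0}$ on $V_\lambda$ from the $K_\lambda$-condition, verifies the identity \eqref{BecauseInK} (your $[\phi_\lambda,\phi_\lambda]_A=0$), and then shows that the unique choice of extension satisfying the selfadjointness constraint on the cross terms is
\begin{equation*}
T_{\lambda,1}=AT_{\lambda,0}^*A\quad\text{on }W_\lambda,
\end{equation*}
which automatically kills the remaining $W_\lambda$--$W_\lambda$ obstruction and is small because $T_{\lambda,0}$ is. This is the linear-algebra content that your sketch leaves as an ``expectation.'' The extension is indeed doable --- so your outline is salvageable --- but as written the argument is incomplete precisely at the step the theorem rests on; the manifold structure of $\SA$ is not a substitute for exhibiting the extension and bounding it.
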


We have written $\mathfrak V_{D_F}=\set{D\in \SA:D\cap D_F\ne 0}$. This is a real-algebraic subvariety of $\SA$ of codimension $1$.

\begin{proof}
If $D\in \SA$, then either $\pi_{D_F^{\perp}}|_D:D\to D_F^\perp$ is injective, or not. In the first case, $D\in U_{D_F^\perp}$, and in the second, $D\in \mathfrak V_{D_F}$. Thus
\begin{equation*}
\SA=(\SA\cap U_{D_F^\perp})\cup \mathfrak V_{D_F}
\end{equation*}
as a disjoint union. 

Proposition \ref{UnstableImpliesInVariety} gives that every element of $\SA\cap U_{D_F^\perp}$ is spectrally stable, so we only need to show that every element of $\mathfrak V_{\Dom_F}$ is spectrally unstable.

Suppose $D\in \mathfrak V_{D_F}$. We will show the existence of curves $\lambda\mapsto D_\lambda$ in $\SA$ such that $D_\lambda\to D$ as $\lambda\to -\infty$ and $D_\lambda\cap K_\lambda\ne 0$. With such a curve we have that if $U$ is a neighborhood of $D$ and $\zeta<0$, then there is $\zeta'<\zeta$ such that $D_\lambda\in U$ for every $\lambda<\zeta'$. Since $K_\lambda\cap D_\lambda\ne 0$, $\lambda$ belongs to the spectrum of $A$ with domain $\Dom_\lambda=D_\lambda+\Dom_{\min}$, which shows that $D$ is spectrally unstable.

By Corollary \ref{SingDelta} and Proposition \ref{DeltaLikeReg}, the operator $F_{\Dom_F}(\lambda):D_F^\perp\to D_F^\perp$ is invertible for every sufficiently negative $\lambda$, so
\begin{equation*}
K_{\lambda}=\set{v+F_{\Dom_F}(\lambda)^{-1}Av:v\in D_F},
\end{equation*}
see \eqref{KlambdaVSFlambdaInverse}. Let $V$ be a subspace of $D\cap D_F$, $V\ne 0$. As usual let $\pi_D$ and $\pi_{D^\perp}$ be the orthogonal projections on $D$ and $D^\perp$. If $v\in V$, then 
\begin{align*}
v+F_{\Dom_F}(\lambda)^{-1} Av
&=\pi_D(v+F_{\Dom_F}(\lambda)^{-1} Av)+\pi_{D^\perp}(v+F_{\Dom_F}(\lambda)^{-1} Av)\\
&=(v+\pi_DF_{\Dom_F}(\lambda)^{-1} Av)+\pi_{D^\perp} F_{\Dom_F}(\lambda)^{-1} Av.
\end{align*}
Let
\begin{equation*}
V_\lambda=\set{v+\pi_DF_{\Dom_F}(\lambda)^{-1} Av:v\in V},
\end{equation*}
a subspace of $D$. Let $W$ be the orthogonal complement of $V$ in $D$. The mapping $D\to D$ given by
\begin{equation*}
V\oplus W\ni (v\oplus w)\mapsto v+\pi_D F_{\Dom_F}(\lambda)^{-1} Av +w\in D
\end{equation*}
is invertible for every sufficiently negative $\lambda$ because $\|F_{\Dom_F}(\lambda)^{-1}\|\to0$ as $\lambda\to -\infty$. Its inverse tends to the identity as $\lambda\to-\infty$ and maps $V_\lambda$ to $V$. Let $S_\lambda:V_\lambda\to V$ be the restriction to $V_\lambda$ of this inverse and define $T_{\lambda,0}:V_\lambda\to D^\perp$ by 
\begin{equation*}
T_{\lambda,0}= \pi_{D^\perp} F_{\Dom_F}(\lambda)^{-1} AS_\lambda.
\end{equation*}
Then
\begin{equation*}
\set{v+T_{\lambda,0}v:v\in V_\lambda}=\set{v+F_{\Dom_F}(\lambda)^{-1}Av:v\in V} \subset K_\lambda,
\end{equation*}
therefore 
\begin{equation}\label{BecauseInK}
\big(v+T_{\lambda,0}v,A(v'+T_{\lambda,0}v')\big)_A=0\quad\text{ for every }v,v'\in V
\end{equation}
(cf. the proof of Lemma \ref{AllLambdaInSomeSpec}). Let $W_\lambda$ be the orthogonal complement of $V_\lambda$ in $D$. We now look for $T_{\lambda,1}:W_\lambda\to D^\perp$ such that with $T_\lambda:D\to D^\perp$ defined as $T_{\lambda,0}$ on $V_\lambda$ and as $T_{\lambda,1}$ on $W_\lambda$ we have that $\graph T_\lambda \in \SA$. Because of \eqref{SA} this will be the case iff for arbitrary $v,v'\in V_\lambda$ and $w,w'\in W_\lambda$ the quantity 
\begin{equation*}
\big(v+w+T_{\lambda,0}v+T_{\lambda,1}w,A(v'+w'+T_{\lambda,0}v'+T_{\lambda,1}w')\big)_A
\end{equation*}
vanishes. Using \eqref{BecauseInK} first and then several times that $D$ and $D^\perp$ are both in $\SA$ (so we can take advantage of \eqref{SA}) while keeping in mind that the ranges of $T_{\lambda,0}$ and $T_{\lambda,1}$ lie in $D^\perp$, the above expression is equivalent to 
\begin{multline*}
\big(v,AT_{\lambda,1}w'\big)_A+\big(T_{\lambda,0}v,Aw'\big)_A+\big(w,AT_{\lambda,0}v'\big)_A+\big(T_{\lambda,1}w,Av'\big)_A\\+\big(w,AT_{\lambda,1}w'\big)_A+\big(T_{\lambda,1}w,Aw'\big)_A
\end{multline*}
In order for this to vanish for all $V,v',w,w'$ it is necessary and sufficient that
\begin{equation*}
\big(v,AT_{\lambda,1}w'\big)_A+\big(T_{\lambda,0}v,Aw'\big)_A=0\text { and }\big(w,AT_{\lambda,1}w'\big)_A+\big(T_{\lambda,1}w,Aw'\big)_As
=0 
\end{equation*}
for all $v\in V_\lambda$ and $w,w'\in W_\lambda$. Letting $T_{\lambda,0}^*:D\to V_{\lambda,0}$ be the adjoint of $T_{\lambda,0}$, the first condition is equivalent to the requirement that $AT_{\lambda,1}=-T_{\lambda,0}^*A$, that is,
\begin{equation*}
T_{\lambda,1}=AT_{\lambda,0}^*A.
\end{equation*}
With this definition of $T_{\lambda,1}$ both $\big(w,AT_{\lambda,1}w'\big)_A$ and  $\big(T_{\lambda,1}w,Aw'\big)_A$ vanish because $W_\lambda \perp V_\lambda$ and $A$ is unitary. Thus $AT_{\lambda}:D\to D$ is selfadjoint, and since $T_\lambda\to 0$ as $\lambda\to -\infty$,
\begin{equation*}
D_\lambda=\graph T_\lambda \in \SA,\ K_\lambda\cap  D_\lambda\ne 0,\text{ and } D_\lambda\to D\text{ as }\lambda\to -\infty.
\end{equation*}
We have shown that $\mathfrak V_{D_F}$ consists of spectrally unstable domains.
\end{proof}

We end with an alternate argument to Proposition \ref{UnstableImpliesInVariety} that that all elements of $\SA\cap U_{D_F^\perp}$ are spectrally stable. Let $D_0\in \SA\cap U_{D_F^\perp}$ be arbitrary, let $T_0:D_F^\perp\to D_F$ be such that $D_0=\graph T_0$, let $S_0=AT_0$, and let $M>\|S_0\|$. Then
\begin{equation*}
U=\set{\graph T\,:\,T\in \L(D_F^\perp, D_F),\ S=AT\text{ selfadjoint, }\|S\|<M}
\end{equation*}
is a neighborhood of $D_0$ in $\SA$. There is $\zeta<0$ such that 
\begin{equation*}
(F_{\Dom_F}(\lambda)u,u)_A\leq -M\|u\|_A^2\quad \forall u\in D_F^\perp,\ \lambda<\zeta.
\end{equation*}
Let $D\in U$, so $D=\graph T$ with $S=AT:D_F^\perp\to D_F^\perp$ selfadjoint and $\|S\|<M$. Then
\begin{equation*}
((F_{\Dom_F}(\lambda)-S)u,u)_A\leq (-M+\|S\|)\|u\|_A^2\quad \forall u\in D_F^\perp,\ \lambda<\zeta
\end{equation*}
hence $\ker(F_{\Dom_F}(\lambda)-S)=0$ if $\lambda<\zeta$. Therefore
\begin{equation*}
 \spec(A_{\Dom_T})\subset [\zeta,\infty\neutral{(})
\end{equation*}
by \eqref{PartSpecD}.

%%%%%%%%%%%%%%%%%%%%%%%%%%%%%%%

\end{document}